\theoremstyle{plain}
\newtheorem{thm}{Theorem}[section]
\newtheorem{proposition}[thm]{Proposition}
\newtheorem{theorem}[thm]{Theorem}
\newtheorem{lemma}[thm]{Lemma}
\newtheorem{problem}[thm]{Problem}
\newtheorem{corollary}[thm]{Corollary}
\theoremstyle{definition}
\newcommand{\RR}{\mathbb{R}}
\newcommand{\CC}{\mathbb{C}}
\newcommand{\OO}{\mathbb{O}}
\newcommand{\HH}{\mathbb{H}}
\newcommand{\DD}{\mathbb{D}}
\newcommand{\cS}{\mathcal{S}}
\newcommand{\EE}{\mathbb{E}}
\newcommand{\tr}{\textup{tr}}
\newcommand{\CS}{\textup{cs}}
\newcommand{\sos}{\textup{sos}}
\newcommand{\Spin}{\textup{Spin}}
\newcommand{\End}{\textup{End}}
\newcommand{\gap}{\textsc{gap}}
\newcommand{\conj}[1]{\overline{#1}}
\title{A convex form that is not a sum of squares}
\author{James Saunderson\thanks{Department of Electrical and Computer Systems Engineering, Monash University, Melbourne VIC 3800, Australia. Email: \texttt{james.saunderson@monash.edu}}}
\begin{document}
\maketitle
\begin{abstract}
	Every convex homogeneous polynomial (or form) is nonnegative.
	Blekherman has shown that there exist convex forms that are not sums of
	squares via a nonconstructive argument.  We provide an explicit example
	of a convex form of degree four in 272 variables that is not a sum of
	squares. The form is related to the Cauchy-Schwarz inequality over the
	octonions. The proof uses symmetry reduction together with the fact
	(due to Blekherman) that forms of even degree, that are near-constant
	on the unit sphere, are convex. Using this same connection, we obtain
	improved bounds on the approximation quality achieved by the basic
	sum-of-squares relaxation for optimizing quaternary quartic forms on
	the sphere.
\end{abstract}

\section{Introduction}
\label{sec:intro}

A multivariate polynomial $p$ of degree $2d$ in $n$ variables is
\emph{homogeneous} if it satisfies $p(\lambda x) = \lambda^{2d}p(x)$ for all
$x\in \RR^n$. Throughout, we use the term \emph{form} to refer to a homogeneous
polynomial.  A form is convex if and only if it satisfies the midpoint
convexity condition $\frac{1}{2}\left(p(x)+p(y)\right) \geq p\left(\frac{1}{2}(x+y)\right)$ for all $x,y\in \RR^n$~\cite{ahmadi2013complete}. 
Convex forms arise naturally, for instance, in the study of polynomial norms~\cite{ahmadi2019polynomial}.
Every convex form is nonnegative since
\[ p(x) = \textstyle{\frac{1}{2}\left(p(x)+p(-x)\right) \geq p\left(\frac{1}{2}(x-x)\right)} = p(0) = 0\quad\textup{for all $x\in \RR^n$}.\]
A natural question, posed by Parrilo in 2007, is whether every convex form is a sum of squares. 

Hilbert~\cite{hilbert1888darstellung} showed that every nonnegative form in $n$ variables of degree $2d$ is a sum of squares
if and only if either $n=2$ or $2d=2$ or $(n,2d)=(3,4)$. 
Since every convex form is nonnegative, in these cases convex forms 
must also be sum of squares. Going beyond this, recently El Khadir~\cite{el2020sum} has established that 
convex quaternary quartic forms are always sums of squares, despite there being nonnegative quaternary 
quartic forms that are not sums of squares. 

Parrilo's question was, however, resolved in the negative when 
Blekherman~\cite{blekherman2012chapter} showed that for
any fixed $2d\geq 4$ there is a sufficiently large $n$ such that there exist $n$-variate 
convex forms of degree $2d$ that are not sums of squares. In particular, he showed that the volume of
a particular compact section of the cone of $n$-variate convex forms is
strictly larger than the volume of the corresponding compact section of the
cone of $n$-variate sums of squares.  Blekherman's argument can be made
quantitative, but it only establishes the existence of convex quartic forms (for instance)
that are not sums of squares for $n\geq 27179089915$.

A stronger notion of convexity for a form is \emph{sum-of-squares (sos) convexity}~\cite{helton2010semidefinite}. 
A form is sos-convex if and only if $\frac{1}{2}\left(p(x)+p(y)\right)-p\left(\frac{1}{2}(x+y)\right)$ is a sum of squares as a polynomial in $x$ and 
$y$~\cite{ahmadi2013complete}. By restricting to $y=-x$, it follows that every sos-convex 
form is a sum of squares. Consequently, any example of a convex form that is not a sum of squares must also be 
an example of a convex form that is not sos-convex. However, all of the currently known examples 
of forms that are convex but not sos-convex are sums of squares~\cite[Section 6]{ahmadi2013complete}. 

The aim of this paper is to present what appears to be the first explicit
example of a convex form (of degree $4$ in $n=272=16\times 17$ variables) that is not
a sum of squares. 

\paragraph{Optimizing forms on the unit sphere}
Our example is also of interest in relation to the fundamental problem of minimizing a form of degree $2d$
over the unit sphere, i.e., computing
\[ p_{\min} := \min_{\|x\|^2=1} p(x).\]
When $2d\geq 4$, this is a very rich class of optimization problems. It includes NP-hard problems 
such as finding the size, $\alpha(G)$, of the largest stable set of a graph $G=(V,E)$, since the associated quartic forms
$p_G(x) = \sum_{i\in V}x_i^4 + 2\sum_{\{i,j\}\in E}x_i^2x_j^2$ satisfy $[p_G]_{\min}=1/\alpha(G)$~\cite{de2002approximation,motzkin1965maxima}.

In general, a lower bound on $p_{\min}$ is given by the quantity
\begin{equation}
	\label{eq:pminsos}
	p_{\min}^{\sos} := \max_{\gamma}\; \gamma\qquad\textup{subject to}\qquad p(x)-\gamma\|x\|^{2d}\;\textup{is a sum of squares}.
\end{equation}
This optimization problem can be reformulated as a semidefinite program (see, e.g.,~\cite{parrilo2012chapter}). For forms with 
a modest number of variables and degree, or forms with additional structure, $p_{\min}^{\sos}$ 
offers a computationally tractable global lower bound on $p_{\min}$.

Suppose that $p_{\max}$ is the maximum value of a form $p$ of degree $2d$ on the unit sphere. 
One way to measure the quality of $p_{\min}^{\sos}$ as a lower bound on $p_{\min}$ is via the quantity
\[ \gap(p):= \frac{p_{\max} - p_{\min}^{\sos}}{p_{\max} - p_{\min}} \geq 1.\]
For example, if $p$ is the Motzkin form $p(x,y,z) = x^2y^4+y^2x^4-3x^2y^2z^2+z^6$ which is nonnegative but not a sum of squares, 
then a numerical computation reveals that $\gap(p) \approx 1.0046$, remarkably close to one. For the quartic forms $p_G$ associated with the stable set problem, it is straightforward to show that $\gap(p_G)< 2$ for all graphs. 
While general upper bounds are known on $\gap(p)$~\cite{nie2012sum,fang2020sum}, 
little appears to be known about explicit forms for which this quantity is large. 
In this paper we will give an explicit example of a quartic form $p$ for which $\gap(p) > 2$.

We now describe the connection between forms for which $\gap(p)$ is large, and forms that are convex but not a sum of squares. In the course of establishing that there are convex forms that are not sums of squares, Blekherman showed that if a form of degree $2d$ is bounded between $1-\frac{1}{2d-1}$ and $1+\frac{1}{2d-1}$ on the unit sphere, then it is convex~\cite[Theorem 4.75]{blekherman2012chapter}. 
In Section~\ref{sec:convexity} 
we derive the following simple consequence of Blekherman's theorem.
\begin{proposition}
 	\label{prop:gap-suff}
 	Let $p$ be a form of degree $2d$ such that $p_{\min}\|x\|^{2d}\leq p(x)
	\leq p_{\max}\|x\|^{2d}$ for all $x\in \RR^n$. If $\gap(p) > d$ then $p(x)
 	-[dp_{\min}-(d-1)p_{\max}]\|x\|^{2d}$ is convex but not a sum of
 	squares.
\end{proposition}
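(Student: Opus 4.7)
The plan is to set $c := d\,p_{\min} - (d-1)p_{\max}$ and $q(x) := p(x) - c\,\|x\|^{2d}$, and then to verify the two conclusions independently. The key observation will be that this particular shift constant $c$ is tuned so that, on the unit sphere, $q/M$ (for an appropriate $M>0$) exactly saturates Blekherman's near-constant condition.

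For the convexity part, I will first evaluate $q$ on the unit sphere: since $\|x\|=1$ kills the $\|x\|^{2d}$ factor, $q$ ranges over $[p_{\min}-c,\,p_{\max}-c] = [(d-1)(p_{\max}-p_{\min}),\, d(p_{\max}-p_{\min})]$. (I tacitly assume $p_{\max}>p_{\min}$, otherwise $\gap(p)$ is not defined.) This interval has midpoint $M := (d-\tfrac{1}{2})(p_{\max}-p_{\min})$ and half-width $\tfrac{1}{2}(p_{\max}-p_{\min}) = M/(2d-1)$, so $q/M$ lies exactly between $1-\tfrac{1}{2d-1}$ and $1+\tfrac{1}{2d-1}$ on the sphere. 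Blekherman's theorem~\cite[Theorem 4.75]{blekherman2012chapter} then certifies that $q/M$ is convex, and hence so is $q$.

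For the non-sum-of-squares part, I will argue by contradiction. If $q$ were a sum of squares, then $\gamma = c$ would be feasible for the program~\eqref{eq:pminsos}, yielding $p_{\min}^{\sos} \geq c$. But the hypothesis $\gap(p)>d$ rearranges (using $p_{\max}>p_{\min}$) to
\[
p_{\min}^{\sos} < p_{\max} - d(p_{\max}-p_{\min}) = d\,p_{\min}-(d-1)p_{\max} = c,
\]
contradicting the previous inequality.

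The whole argument is a short computation; the only nontrivial point is spotting that the shift $c = d\,p_{\min} - (d-1)p_{\max}$ is simultaneously the one for which $q/M$ saturates Blekherman's near-constant hypothesis and the one which makes the gap condition $\gap(p)>d$ directly obstruct the sos feasibility of $q$. Once this alignment is noticed, both halves of the proposition follow in one line each, so I do not expect a real obstacle.
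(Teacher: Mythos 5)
Your proof is correct and matches the paper's argument: the paper establishes the same two facts via Lemma~\ref{lem:cvx-sos} (normalizing $p - c\|x\|^{2d}$ by $\tfrac{1}{2}(2d-1)(p_{\max}-p_{\min})$, which is exactly your $M$, then applying Theorem~\ref{thm:blekherman}) and then deduces the proposition by rearranging $\gap(p)>d$ into $p_{\min}^{\sos} < dp_{\min}-(d-1)p_{\max}$. Your presentation as ``shift then normalize'' is just a cosmetic repackaging of the same computation, and your identification of the constant $c$ as the one simultaneously saturating Blekherman's window and obstructing sos-feasibility is precisely the observation driving the paper's proof.
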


\paragraph{The example}
The example of a quartic form with $\gap(\cdot)>2$ we focus on in this paper is one of a family of quartic 
forms $\CS_k^\OO$ in $n=16k$ variables, associated with the Cauchy-Schwarz inequality over the octonions.

Let $\DD$ denote a finite dimensional real normed division algebra. By
Hurwitz's theorem~\cite{hurwitz1898ueber}, $\DD$ is either $\RR$, $\CC$, the quaternions $\HH$,
or the octonions $\OO$. (The necessary background on the octonions is
summarized in Section~\ref{sec:oct-spin}.)
If $x\in \DD$, let $\conj{x}$ denote its conjugate and $|x| = (\conj{x}x)^{1/2}$ denote
its norm. If $x,y\in \DD^{k}$ then define $\|x\|^2 = \sum_{i=1}^{k}|x_i|^2$ and
$(x,y) = \sum_{i=1}^{k}\conj{x_i}y_i \in \DD$.  For all $k\geq 1$ we have that 
\[\CS_k^\DD(x,y):= \|x\|^2\|y\|^2 - |(x,y)|^2 \geq 0\quad\textup{for all $x,y\in \DD^k$},\]
giving a generalization of the Cauchy-Schwarz inequality. 
For a proof in the octonionic case, see, e.g.,~\cite{kramer1998octonion}. 
All the other cases follow by restricting to a suitable subspace. 

The Cauchy-Schwarz form over the reals or the complex numbers is well-known to be a sum of squares. 
One way to see this is to express the Cauchy-Schwarz form as a determinant and use the Cauchy-Binet formula, giving
\[ \|x\|^2\|y\|^2 - |(x,y)|^2 = \det\begin{bmatrix} \|x\|^2 & (x,y)\\\conj{(x,y)} & \|y\|^2\end{bmatrix} 
	= \sum_{1\leq i<j\leq k} \left|\det\begin{bmatrix} x_i & x_j\\y_i & y_j\end{bmatrix}\right|^2.\]
		While the $2\times 2$ Hermitian determinantal representation of the Cauchy-Schwarz form remains valid over 
		$\HH$ and $\OO$ (for a suitable
		notion of determinant), the Cauchy-Binet formula does not hold in these cases, and 
		the Cauchy-Schwarz forms over $\HH$ and $\OO$ are, in fact, not sums of 
		squares~\cite{ge2018isoparametric}.

In the quaternionic case, it turns out that $\gap(\CS_k^\HH) \leq 2$ for all $k$, and so that quaternionic 
Cauchy-Schwarz forms cannot provide the example we are looking for. 
On the other hand, for sufficiently large $k$ the octonionic Cauchy-Schwarz forms give rise to convex forms that are not
sums of squares.
\begin{theorem}
	\label{thm:main}
	If $k\geq 17$ then $\gap(\CS_k^{\OO}) > 2$ and $\CS_{k}^\OO(x,y) + (1/4)(\|x\|^2+\|y\|^2)^2$
	is a convex quartic form in $16k$ variables that is not a sum of squares.
\end{theorem}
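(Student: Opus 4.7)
The plan is to apply Proposition~\ref{prop:gap-suff} with $d=2$ to the quartic form $p=\CS_k^\OO$, viewed as a form in the $16k$ real coordinates of $(x,y)\in\OO^k\times\OO^k$. First I would identify the extremes on the unit sphere $\|x\|^2+\|y\|^2=1$: the minimum $p_{\min}=0$ is immediate from the octonionic Cauchy--Schwarz inequality (achieved at $x=y$ with $\|x\|^2=1/2$), and the maximum $p_{\max}=1/4$ follows from the AM-GM estimate $\|x\|^2\|y\|^2\leq \tfrac14(\|x\|^2+\|y\|^2)^2$ together with the non-negativity of $|(x,y)|^2$ (attained when $\|x\|=\|y\|=1/\sqrt2$ and $(x,y)=0$, which exists as soon as $k\geq 2$). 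Substituting $p_{\min}=0$ and $p_{\max}=1/4$ into the proposition, the condition $\gap(p)>2$ becomes exactly $p^{\sos}_{\min}<-1/4$, and the prescribed perturbation $p-[d\,p_{\min}-(d-1)p_{\max}]\|\cdot\|^{2d}$ becomes $\CS_k^\OO(x,y)+\tfrac14(\|x\|^2+\|y\|^2)^2$. So the whole theorem reduces to showing $p^{\sos}_{\min}(\CS_k^\OO)<-1/4$ for $k\geq 17$; convexity of the perturbation and its failure to be a sum of squares then both follow from Proposition~\ref{prop:gap-suff}.

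To bound $p^{\sos}_{\min}$ from above I would pass to the dual SDP, whose optimum equals the infimum of $\tilde\EE[\CS_k^\OO]$ over degree-$4$ pseudo-expectations $\tilde\EE$ on $\RR^{16k}$ normalized by $\tilde\EE[(\|x\|^2+\|y\|^2)^2]=1$. The Cauchy--Schwarz form is invariant under a large group $G$ combining the swap $x\leftrightarrow y$, the diagonal action of $O(k,\RR)$ on the $k$ octonionic coordinates, and a spin-group action on each octonionic slot coming from the background of Section~\ref{sec:oct-spin}. Averaging any feasible pseudo-expectation over $G$ preserves both feasibility and objective, so one may restrict to $G$-invariant $\tilde\EE$; these are parametrized by only a few scalar invariants such as $\tilde\EE[\|x\|^4+\|y\|^4]$, $\tilde\EE[\|x\|^2\|y\|^2]$, and $\tilde\EE[|(x,y)|^2]$. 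Via Schur's lemma applied to the isotypic decomposition of the $G$-representation on degree-$2$ forms in $(x,y)$, the positive-semidefiniteness constraint on the quartic moment matrix splits into a small number of block constraints whose sizes are independent of $k$, with $k$ entering only as a parameter in the block entries.

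The main obstacle is executing this reduction cleanly and extracting the precise $k$-dependence. Non-associativity of octonion multiplication complicates both the determination of $G$ and the explicit decomposition of the relevant representations, and care is needed not to borrow formulas from $\CC$ or $\HH$ uncritically (which is exactly why Section~\ref{sec:oct-spin} is devoted to the octonionic background). I expect one specific block of the reduced SDP to carry all the action: its positive-semidefiniteness should reduce to a scalar inequality in $k$ which holds for small $k$ and first fails at $k=17$, at which point an explicit pseudo-expectation witnessing $\tilde\EE[\CS_k^\OO]<-1/4$ can be read off from the infeasibility certificate. As a sanity check, the parallel quaternionic fact $\gap(\CS_k^\HH)\leq 2$ for all $k$, noted earlier, indicates that any successful argument must genuinely exploit features of $\OO$ absent in $\HH$, not merely generic Cauchy--Schwarz structure.
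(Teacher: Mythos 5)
Your first paragraph is exactly the paper's reduction: with $p_{\min}=0$, $p_{\max}=1/4$, and $d=2$, Proposition~\ref{prop:gap-suff} turns the theorem into showing that $q_k^\OO=\CS_k^\OO+\tfrac14(\|x\|^2+\|y\|^2)^2$ is not a sum of squares for $k\geq 17$. Your second and third paragraphs are, in substance, the dual (pseudo-moment) version of the paper's primal symmetry reduction: your group $G$ is $\Spin(9)\times O(k)$ in disguise (the swap $x\leftrightarrow y$ and the per-slot spin action you describe are precisely the generators of $\Spin(9)$ acting on $\OO\oplus\OO\cong\RR^{16}$), and restricting to $G$-invariant objects is exactly how Section~\ref{sec:notsos} proceeds. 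One point to sharpen: what makes this tractable is not merely that the blocks are $k$-independent but that the decomposition of $\RR^{16\times k}\otimes\RR^{16\times k}$ under $\Spin(9)\times O(k)$ is \emph{multiplicity-free} (Proposition~\ref{prop:tensor-decomp}); that is what collapses the symmetry-reduced SDP to a linear program with eight scalar nonnegativity constraints and three moment equalities, rather than a block-diagonal SDP. Also, the failure at $k=17$ is not localized to a single block suddenly going infeasible: it is certified by a Farkas combination of the three equalities, equivalently an invariant pseudo-expectation that is nonnegative on all eight isotypic projectors but negative on $q_{17}^\OO$, so your ``one block carries all the action'' phrasing is a slight mis-prediction even though your closing remark about reading a pseudo-expectation off the infeasibility certificate is exactly what the paper produces. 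The genuine remaining gap is the concrete representation-theoretic bookkeeping you flag as the obstacle --- the Clifford-system decomposition $\RR^{16\times 16}=V_0\oplus V_1\oplus V_2\oplus V_3\oplus V_4$ under $\Spin(9)$, the decomposition $\RR^{k\times k}=U_{-1}\oplus U_0\oplus U_1$ under $O(k)$, and the evaluation of the resulting $3\times 8$ linear system at explicit matrices $X_1,X_2,X_3$; this is the bulk of Section~\ref{sec:notsos}, and carrying it out is what separates your plan from a complete proof.
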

The forms 
\[ q_k^\OO(x,y):= \CS_k^{\OO}(x,y) + (1/4)(\|x\|^2+\|y\|^2)^2\]
are convex for all $k\geq 1$, and are 
sums of squares exactly for $k\leq 16$ (see Theorem~\ref{thm:main-final}). As such,  
the smallest example, among this family, of a convex form that is not a sum of
squares occurs when $k=17$.  Overall, though, there is no reason at all to expect the example is
minimal in terms of the number of variables, among all convex forms that are not sums of squares. 

The main difficulty in establishing Theorem~\ref{thm:main} is establishing that $q_k^{\OO}$ is not a sum of squares
for $k\geq 17$. To do so, we exploit the symmetry properties of $q_k^{\OO}$. It turns out that $q_{k}^{\OO}$ is invariant under a certain action of the compact Lie group 
$\textup{Spin}(9)\times O(k)$. Explicitly decomposing the space of quadratic forms in $16k$ variables
into irreducible representations under this action reveals that the decomposition is multiplicity-free. 
This means that the problem of deciding whether $q_{k}^{\OO}$ is a sum of squares can be reduced to a small 
linear program by using the symmetry reduction framework of Gatermann and Parrilo~\cite{gatermann2004symmetry}. 

The paper is organized as follows. In Section~\ref{sec:convexity} we establish the 
connection between convex forms that are not sums of squares, and sum-of-squares
bounds on the minimum of a form on the unit sphere.
Using this simple connection we establish that the forms $q_{k}^\OO$
are convex for all $k$. In Section~\ref{sec:symred} we recall basic facts about
representation theory and symmetry reduction of semidefinite programs and sum of squares
feasibility problems. In Section~\ref{sec:notsos} we apply the general symmetry reduction framework to
the specific semidefinite feasibility problem of deciding whether 
$q_{k}^{\OO}$ is a sum of squares. We give an explicit sum-of-squares decomposition of
$q_{16}^\OO$ and give an explicit certificate that $q_{17}^{\OO}$ is not a sum
of squares.  In Section~\ref{sec:discussion}, we discuss some open questions
related to the results presented in the paper.

\section{Convexity and optimization on the sphere}
\label{sec:convexity}

As part of the proof that there are convex forms that are not sums of squares, Blekherman established the 
following sufficient condition for a form of even degree to be convex. 
\begin{theorem}[{{Blekherman~\cite{blekherman2012chapter}}}]
\label{thm:blekherman}
If $p$ is a form of degree $2d$ in $n$ variables and 
\[ \|x\|^{2d}\left(1-\frac{1}{2d-1}\right) \leq p(x) 
\leq \|x\|^{2d}\left(1+\frac{1}{2d-1}\right)\quad\textup{for all $x\in \RR^n$}\]
then $p$ is convex.
\end{theorem}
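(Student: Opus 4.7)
The plan is to prove that $\nabla^2 p(x)\succeq 0$ for every $x\in\RR^n$, which is equivalent to convexity of $p$; by homogeneity of $\nabla^2 p$ it suffices to check unit vectors $x$. Fix $\|x\|=1$, decompose an arbitrary direction as $v=\alpha x+\beta w$ with $w\perp x$ and $\|w\|=1$, and restrict $p$ to the $2$-plane spanned by $x,w$ by setting
\[
 g(\theta):=p(\cos\theta\,x+\sin\theta\,w).
\]
Then $g$ is a trigonometric polynomial whose nonzero Fourier modes lie in $\{0,2,\ldots,2d\}$, and the hypothesis gives $g(\theta)\in[1-C,1+C]$ for all $\theta$, where $C=1/(2d-1)$.

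Writing $p(x+tv)=\|x+tv\|^{2d}\,g(\theta(t))$ and invoking Euler's identity $x\cdot\nabla p(x)=2d\,p(x)$, a chain-rule computation yields
\[
 v^\top\nabla^2 p(x)\,v\;=\;(\alpha,\beta)\,M(g)\,(\alpha,\beta)^\top,\quad M(g):=\begin{pmatrix}2d(2d-1)g(0)&(2d-1)g'(0)\\(2d-1)g'(0)&2d\,g(0)+g''(0)\end{pmatrix}.
\]
Convexity of $p$ thus reduces to $M(g)\succeq 0$ for every $g$ of the above form. The $(1,1)$ entry is automatic, and what remains is the diagonal condition $2d\,g(0)+g''(0)\geq 0$ and the determinantal condition $2d\,g(0)\bigl[2d\,g(0)+g''(0)\bigr]\geq(2d-1)\bigl[g'(0)\bigr]^2$.

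For the diagonal condition, substitute $\psi=2\theta$ so that $g(\theta)=f(\psi)$ with $\deg f\leq d$, normalize $f=1+Cq$ with $|q|\leq 1$, and reduce (after observing that the relevant functional depends only on the cosine part of $q$) via the Chebyshev substitution $q(\psi)=\widetilde q(\cos\psi)$ to the algebraic polynomial inequality
\[
 \widetilde q\,'(1)-\tfrac{d}{2}\widetilde q(1)\;\leq\;\tfrac{d(2d-1)}{2}\qquad\textup{for all $\widetilde q$ of degree $\leq d$ with $|\widetilde q|\leq 1$ on $[-1,1]$.}
\]
Equality is attained by the Chebyshev polynomial $T_d$, since $T_d(1)=1$ and $T_d'(1)=d^2$. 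I would prove extremality of $T_d$ via a Duffin--Schaeffer / equi-oscillation argument, showing that any maximizer of this linear functional must equi-oscillate $d+1$ times in $[-1,1]$ and so coincide (up to sign) with $T_d$. The determinantal condition reduces similarly after coupling with a Bernstein--Szeg\H{o} bound on $(g'(0))^2$; the same extremal $g(\theta)=1+C\cos(2d\theta)$ saturates both inequalities, which simplifies the verification.

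The main obstacle is the sharpness of this mixed Markov-type inequality. The classical Markov bound $|\widetilde q\,'(1)|\leq d^2$, combined with $|\widetilde q(1)|\leq 1$, yields only $d^2+d/2$, exceeding the needed $d(2d-1)/2=d^2-d/2$ by exactly $d$, so one cannot simply plug in Markov. The genuine content of Blekherman's theorem is the Chebyshev-sharp inequality above, whose equi-oscillation proof is the technical heart of the argument.
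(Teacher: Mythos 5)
The paper does not actually prove Theorem~\ref{thm:blekherman}; it is imported from Blekherman's chapter and used as a black box, so there is no in-paper argument to compare against. Judged on its own terms, your reduction is on the right track but has a real gap at the end.

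The setup is sound. Restricting $p$ to the 2-plane $\mathrm{span}\{x,w\}$ and writing $g(\theta)=p(\cos\theta\,x+\sin\theta\,w)$, the identity
\[
v^\top\nabla^2 p(x)\,v=(\alpha,\beta)\begin{pmatrix}2d(2d-1)g(0)&(2d-1)g'(0)\\(2d-1)g'(0)&2d\,g(0)+g''(0)\end{pmatrix}(\alpha,\beta)^\top
\]
is correct (one can check it directly via $r(t)=\|x+tv\|$, $\theta(t)=\arctan\bigl(t\beta/(1+t\alpha)\bigr)$, giving $r(0)=1$, $r'(0)=\alpha$, $r''(0)=\beta^2$, $\theta'(0)=\beta$, $\theta''(0)=-2\alpha\beta$). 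Your reduction of the $(2,2)$-entry condition $2d\,g(0)+g''(0)\ge 0$ to the algebraic-polynomial inequality $\widetilde q\,'(1)-\tfrac{d}{2}\widetilde q(1)\le \tfrac{d(2d-1)}{2}$ is also correct, and you are right that $T_d$ attains equality and that this is genuinely stronger than what Markov's $|\widetilde q\,'(1)|\le d^2$ plus $|\widetilde q(1)|\le 1$ gives; a Chebyshev/equioscillation argument is the natural tool here. So far so good.

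The gap is the determinant condition $2d\,g(0)\bigl[2d\,g(0)+g''(0)\bigr]\ge(2d-1)g'(0)^2$, which you wave away as ``reduces similarly after coupling with a Bernstein--Szeg\H{o} bound.'' This does not close. Bernstein--Szeg\H{o} gives $g'(0)^2\le 4d^2\bigl[(1+C)-g(0)\bigr]\bigl[g(0)-(1-C)\bigr]$, while the Chebyshev step gives only the $g(0)$-independent lower bound $2d\,g(0)+g''(0)\ge 0$. Plugging these in asks for $2d\,g(0)\cdot 0\ge (2d-1)\cdot 4d^2\bigl[(1+C)-g(0)\bigr]\bigl[g(0)-(1-C)\bigr]$, which is false for $1-C<g(0)<1+C$. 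What is actually needed is a \emph{refined} lower bound on $2d\,g(0)+g''(0)$ that improves (roughly linearly) as $g(0)$ moves away from $1+C$, so that it can absorb the strictly positive right-hand side coming from Bernstein--Szeg\H{o}. That is a different, coupled Markov-type extremal problem involving all three of $g(0),g'(0),g''(0)$ simultaneously; the single functional $\widetilde q\,'(1)-\tfrac d2\widetilde q(1)$ does not control it. The observation that $g(\theta)=1+C\cos(2d\theta)$ saturates the determinant inequality is true but vacuous, since at that $g$ both sides vanish ($g'(0)=0$ and $2d\,g(0)+g''(0)=0$), so it gives no information about nearby $g$ with $g'(0)\ne 0$. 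To repair the argument you would need to treat, for each direction $(\alpha,\beta)$, the full linear functional $g\mapsto 2d(2d-1)\alpha^2 g(0)+2(2d-1)\alpha\beta\,g'(0)+\beta^2\bigl(2d\,g(0)+g''(0)\bigr)$ and show its minimum over $\{\,g:\|g-1\|_\infty\le C,\ \text{even modes}\le 2d\,\}$ is nonnegative; the diagonal Chebyshev inequality handles only $\alpha=0$.
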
 
In order to find a convex form that is not a sum of squares, 
one approach is to find a form $p$ for which the bound $p_{\min}^{\sos}$ on its minimum on the sphere, defined in~\eqref{eq:pminsos}, 
is far from its true minimum on the sphere.
The following simple consequence of Theorem~\ref{thm:blekherman} makes this precise.
\begin{lemma}
\label{lem:cvx-sos}
Let $p$ be an form of degree $2d$ in $n$ variables such that $p_{\min}\|x\|^{2d} \leq p(x) \leq p_{\max}\|x\|^{2d}$ for all $x\in \RR^n$. 
Then 
	$p(x)-[dp_{\min}-(d-1)p_{\max}]\|x\|^{2d}$ is convex.
Furthermore
				if $p_{\min}^{\sos} < dp_{\min} - (d-1)p_{\max}$ then $p-[dp_{\min}-(d-1)p_{\max}]\|x\|^{2d}$ is 
not a sum of squares.
\end{lemma}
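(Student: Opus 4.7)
The plan is to reduce the first claim to Blekherman's theorem by picking the right affine rescaling, and to obtain the second claim directly from the definition of $p_{\min}^{\sos}$.

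First, write $\gamma := dp_{\min} - (d-1)p_{\max}$ and $q(x) := p(x) - \gamma \|x\|^{2d}$. Since $q$ is a form of degree $2d$, to check the hypothesis of Theorem~\ref{thm:blekherman} it suffices to bound $q(x)$ pointwise by multiples of $\|x\|^{2d}$, which by homogeneity reduces to bounding $q$ on the unit sphere. A direct substitution of the bounds $p_{\min}\leq p(x)\leq p_{\max}$ (on the sphere) gives
\[ q_{\min} = p_{\min}-\gamma = (d-1)(p_{\max}-p_{\min}),\qquad q_{\max} = p_{\max}-\gamma = d(p_{\max}-p_{\min}).\]
The bracketing interval $[(d-1)M,dM]$ with $M:=p_{\max}-p_{\min}$ has midpoint $c:=(d-\tfrac{1}{2})M$, and one checks that $(d-1)/(d-\tfrac{1}{2}) = 1-\tfrac{1}{2d-1}$ and $d/(d-\tfrac{1}{2}) = 1+\tfrac{1}{2d-1}$. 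This is exactly the ratio appearing in Theorem~\ref{thm:blekherman}, so the rescaled form $q/c$ satisfies
\[ \|x\|^{2d}\!\left(1-\tfrac{1}{2d-1}\right) \leq \tfrac{1}{c}q(x) \leq \|x\|^{2d}\!\left(1+\tfrac{1}{2d-1}\right)\qquad\text{for all $x\in\RR^n$.}\]
Blekherman's theorem then gives convexity of $q/c$, and hence of $q$ (since $c>0$). The degenerate case $p_{\max}=p_{\min}$ needs to be handled separately, but there $p$ is a scalar multiple of $\|x\|^{2d}$ and $q\equiv 0$, which is trivially convex.

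For the second part, I would use the definition of $p_{\min}^{\sos}$ in~\eqref{eq:pminsos} directly: $p_{\min}^{\sos}$ is the supremum of the set of $\gamma'\in\RR$ for which $p(x)-\gamma'\|x\|^{2d}$ is a sum of squares. If $p(x)-\gamma\|x\|^{2d}$ were a sum of squares, then $\gamma$ would be feasible for that maximization, forcing $p_{\min}^{\sos}\geq \gamma$ and contradicting the hypothesis $p_{\min}^{\sos}<\gamma=dp_{\min}-(d-1)p_{\max}$.

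There is no real obstacle here; the only content is in spotting that $dp_{\min}-(d-1)p_{\max}$ is the correct shift to make the extremal values of $q$ on the sphere straddle the midpoint of Blekherman's interval symmetrically. Everything else is a one-line verification.
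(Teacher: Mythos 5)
Your proposal is correct and follows essentially the same route as the paper: you translate $p$ by $\gamma\|x\|^{2d}$, observe that the resulting form is pinched between $(d-1)(p_{\max}-p_{\min})\|x\|^{2d}$ and $d(p_{\max}-p_{\min})\|x\|^{2d}$, rescale by the midpoint $c=\frac{1}{2}(2d-1)(p_{\max}-p_{\min})$ so that Theorem~\ref{thm:blekherman} applies, and get the second claim from the definition of $p_{\min}^{\sos}$, which is precisely the paper's argument. Your explicit handling of the degenerate case $p_{\max}=p_{\min}$ is a small touch of extra care that the paper's terse "simple computation" glosses over.
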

\begin{proof}
	If $p_{\min}\|x\|^{2d} \leq p(x) \leq p_{\max}\|x\|^{2d}$ for all $x\in \RR^n$, then a simple computation shows that 
	\begin{align*}
		\|x\|^{2d}\left(1-\frac{1}{2d-1}\right) &\leq \|x\|^{2d}\left(1-\frac{1}{2d-1}\frac{p_{\max}+p_{\min}}{p_{\max}-p_{\min}}\right) + \frac{2}{2d-1}\frac{1}{p_{\max}-p_{\min}}p(x)\\& \leq \|x\|^{2d}\left(1+\frac{1}{2d-1}\right)\quad\textup{for all $x\in \RR^n$}.
	\end{align*}
Applying Theorem~\ref{thm:blekherman}, and multiplying through by the positive quantity
 $\frac{1}{2}(2d-1)(p_{\max}-p_{\min})$, establishes that
	$p-[dp_{\min}-(d-1)p_{\max}]\|x\|^{2d}$ is convex.
The final part of the statement follows directly from the definition of $p_{\min}^{\sos}$.
\end{proof}
Proposition~\ref{prop:gap-suff}, connecting forms with large $\gap(\cdot)$ values and 
forms that are convex but not sums of squares, can be readily deduced from Lemma~\ref{lem:cvx-sos}.
\begin{proof}[{Proof of Proposition~\ref{prop:gap-suff}}]
	If $p$ is a form of degree $2d$ and $\gap(p)>d$ then $p_{\max} - p_{\min}^{\sos} > d(p_{\max} - p_{\min})$. 
	Therefore, $p_{\min}^{\sos} < dp_{\min} - (d-1)p_{\max}$. Lemma~\ref{lem:cvx-sos} then tells us that 
	$p-[dp_{\min}-(d-1)p_{\max}]\|x\|^{2d}$ is not a sum of squares.
\end{proof}

Recall the family of nonnegative quartic forms 
$\CS_k^{\OO}(x,y) = \|x\|^2\|y\|^2 - |\langle x,y\rangle|_{\OO}^2$, 
in $16k$ variables, associated with the Cauchy-Schwarz inequalty over the octonions. These satisfy 
\begin{equation}
	\label{eq:bd-cs}
(\|x\|^2+\|y\|^2)^2(1/4) \geq \|x\|^2\|y\|^2 \geq  c_k^\OO(x,y) \geq 0\quad\textup{for all $(x,y)\in \OO^{2k}$},
\end{equation}
where the first inequality follows from the AM-GM inequality. Both inequalities are tight. Applying Lemma~\ref{lem:cvx-sos}
with $p_{\min} = 0$ and $p_{\max}=1/4$ and $2d=4$ we can conclude that the forms
\[ q_k^\OO(x,y) = \CS_k^\OO(x,y) + (1/4)(\|x\|^2+\|y\|^2)^2\]
are convex, for all $k\geq 1$. 

\subsection{Worst-case bounds for optimization of quartic forms on the unit sphere}
\label{sec:worst-case}

In cases where convex forms are sums of squares (but nonnegative forms are not necessarily sums of squares), 
we can use Proposition~\ref{prop:gap-suff} to give new upper bounds on $\gap(p)$.
In particular, in~\cite{el2020sum}, El Khadir showed that every convex quaternary quartic form is a sum of squares. Combining this 
with the contrapositive of Proposition~\ref{prop:gap-suff} gives the following result.
\begin{corollary}
	If $p$ is a quaternary quartic form then $\gap(p) \leq 2$. 
\end{corollary}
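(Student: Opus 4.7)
The plan is to deduce the corollary by contraposition from Proposition~\ref{prop:gap-suff}, using El Khadir's theorem as the essential input. Specifically, I would suppose for contradiction that there exists a quaternary quartic form $p$ with $\gap(p)>2$. Since $2d=4$ gives $d=2$, Proposition~\ref{prop:gap-suff} applies directly and produces the form
\[ q(x) := p(x) - [2p_{\min} - p_{\max}]\|x\|^4, \]
which is guaranteed to be convex and not a sum of squares.

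Next I would observe that $q$ is again a quartic form in $4$ variables: $p$ is a quaternary quartic by hypothesis, $\|x\|^4 = (x_1^2+x_2^2+x_3^2+x_4^2)^2$ is a quaternary quartic, and the difference of two quaternary quartics is quaternary quartic. So $q$ itself is a convex quaternary quartic form.

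At this point the result of El Khadir~\cite{el2020sum}, that every convex quaternary quartic form is a sum of squares, immediately applies to $q$. This contradicts the conclusion of Proposition~\ref{prop:gap-suff} that $q$ is not a sum of squares. Hence no quaternary quartic can have $\gap(p)>2$, which is the desired bound.

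There is essentially no technical obstacle here; the entire argument is a one-line contraposition once both ingredients (Proposition~\ref{prop:gap-suff} and El Khadir's theorem) are in hand. The only thing one should be slightly careful about is verifying that passage to $q$ preserves both the number of variables ($4$) and the degree ($4$), so that El Khadir's hypothesis really is satisfied. That verification is immediate from the expressions involved.
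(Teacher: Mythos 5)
Your argument matches the paper's: the corollary is stated there as an immediate consequence of combining El Khadir's theorem with the contrapositive of Proposition~\ref{prop:gap-suff}, and your contraposition spells out exactly that combination, including the (correct) observation that subtracting a multiple of $\|x\|^4$ preserves the quaternary quartic property.
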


We now compare this result with other upper bounds on $\gap(p)$ from~\cite{nie2012sum,fang2020sum}. 
We note that the comparison bounds hold in much greater generality, so it is unsurprising that improvements
are possible in the very special case considered here.
From Nie~\cite{nie2012sum} we have that for any quaternary quartic form, 
\[ \gap(p) = \frac{p_{\max} - p_{\min}^{\sos}}{p_{\max} - p_{\min}} \leq \frac{1}{\delta_4} \approx\frac{1}{0.0559}\approx 17.88\]
where $\delta_4$ is defined in~\cite[Equation 2.10]{nie2012sum} and its numerical value is given in~\cite[Table 1]{nie2012sum}. 
A special case of~\cite[Theorem 6]{fang2020sum} shows that for any polynomial of degree $2d$ in $n$ variables we have 
\begin{equation}
	\label{eq:hamzabound}
	\gap(p) = \frac{p_{\max} - p_{\min}^{\sos}}{p_{\max} - p_{\min}} \leq 1+(B_{2d}/2)\rho_{2d}(n,d).
\end{equation}
In the case $n=4$ and $2d=4$ one can compute numerically that $\rho_4(4,2) \approx 5.426$. The quantity 
$B_4$ is the supremum of $\|f_{4}\|_{\infty}/\|f\|_{\infty}$ where $f$ is a polynomial
of degree $4$ on the sphere, $f_4$ is the fourth harmonic component of $f$, and $\|f\|_{\infty}$ is the maximum absolute 
value of $f$ on the sphere. Restricting to the case of quartic forms in four variables, the argument in~\cite[Appendix B]{fang2020sum}
gives $B_4\leq 8$. This shows that the right hand side of~\eqref{eq:hamzabound} is at most $1+4(5.426) \approx 22.704$.
A simple lower bound on $B_4$ is $B_4\geq 1$. This shows that the right hand side of~\eqref{eq:hamzabound} is at least $3.713$.

\section{Background on representation theory and symmetry reduction}
\label{sec:symred}
In this section we summarize some basic notation, terminology, and facts about
representation theory of compact groups over the reals that will be required
for what follows.  We then briefly review symmetry reduction for semidefinite
programming feasibility problems in general, before specializing to feasibility
problems related to checking whether a form is a sum of squares.

\subsection{Preliminaries on representation theory}
\label{sec:repthy}

Let $G$ be a compact group. A \emph{representation} $(V,\rho_V)$ of $G$ over
$\RR$ is a real vector space $V$ together with a group homomorphism $\rho_V: G
\rightarrow GL_{\RR}(V)$ from $G$ to invertible $\RR$-linear maps on $V$. 
We often write $g\cdot x$ instead of $\rho_V(g)x$ when the homomorphism is clear
from the context, and refer to $V$, itself, as the representation.  
Throughout, we
assume we are working with finite-dimensional representations. 
Associated with a compact group $G$
is the \emph{Haar measure}, denoted $\mu_G$, which we normalize so that
$\mu_G(G)=1$. 

If $G$ is compact, we can equip any real representation $(V,\rho_V)$ of $G$
over $\RR$ with a $G$-invariant inner product, with the property that $\langle
g\cdot x,g\cdot y\rangle = \langle x,y\rangle$ for all $x,y\in V$ and $g\in
G$. With this choice, the representation is \emph{orthogonal} in the sense that 
$\rho_V(g)$ is an orthogonal transformation for all $g\in G$.

If $U$ is a representation of $G$ over $\RR$, then a subspace $V\subseteq U$ is
\emph{invariant} if $g\cdot v \in V$ for all $v\in V$ and $g\in G$. If the only
invariant subspaces of $U$ are $\{0\}$ and $U$, then we say that $U$ is
an \emph{irreducible} representation. 
If $V$ is an invariant subspace of $U$, then so is its orthogonal complement $V^\perp$
with respect to a $G$-invariant inner product on $U$. 

If $(U,\rho_U)$ and $(V,\rho_V)$ are two representations of a compact
group $G$ over $\RR$, let $\textup{Hom}_G(U,V)$ denote the vector space of
$\RR$-linear maps $A:U\rightarrow V$ such that $\rho_V(g)A = A\rho_U(g)$ for
all $g\in G$. Schur's lemma~\cite{schur1901klasse} tells us about the structure of these intertwining maps when $U$ and $V$ are 
irreducible.
\begin{lemma}[Schur's lemma]
	\label{lem:schur}
	If $U$ and $V$ are irreducible representations of $G$ over $\RR$ and $A\in \textup{Hom}_G(U,V)$ then 
	either $A=0$ or $A$ is invertible.
\end{lemma}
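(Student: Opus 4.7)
The plan is to exploit the two canonical subspaces associated to any linear map, namely $\ker A \subseteq U$ and $\operatorname{im} A \subseteq V$, and argue that the intertwining property forces both to be $G$-invariant. Once that is in hand, irreducibility of $U$ and $V$ immediately restricts each of these subspaces to one of two possibilities, and combining the four cases yields the dichotomy in the statement.

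First I would check that $\ker A$ is invariant: if $u \in \ker A$ and $g \in G$, then $A(\rho_U(g) u) = \rho_V(g) A u = 0$, so $\rho_U(g) u \in \ker A$. A symmetric computation shows $\operatorname{im} A$ is invariant: any $Au \in \operatorname{im} A$ satisfies $\rho_V(g)(Au) = A(\rho_U(g)u) \in \operatorname{im} A$. Here I only use the defining relation $\rho_V(g) A = A \rho_U(g)$ and nothing about the field or about inner products, so the argument is uniform.

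Next I would apply irreducibility. Since $U$ is irreducible, $\ker A \in \{0, U\}$. In the case $\ker A = U$ we are done with $A = 0$, so assume $\ker A = \{0\}$, i.e., $A$ is injective. Since $V$ is irreducible, $\operatorname{im} A \in \{0, V\}$; but $A$ injective and $U \neq 0$ forces $\operatorname{im} A \neq 0$, hence $\operatorname{im} A = V$ and $A$ is surjective. Thus $A$ is a bijective $\RR$-linear map, hence invertible.

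There is essentially no obstacle: the proof is a two-line verification of invariance followed by a case split. The only thing to be mildly careful about is that the statement is about real representations — unlike the complex case, this real version does not give a scalar classification of endomorphisms (the endomorphism algebra of an irreducible real representation can be $\RR$, $\CC$, or $\HH$), but the bare injectivity/surjectivity dichotomy above is field-independent, so nothing additional is required.
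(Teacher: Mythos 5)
Your proof is correct and is the standard argument: $\ker A$ and $\operatorname{im}A$ are invariant subspaces by the intertwining relation, so irreducibility forces each to be trivial or everything, giving the dichotomy. The paper itself states Schur's lemma with only a citation to Schur's original article and provides no proof, so there is nothing to compare against; your argument fills that gap correctly, and your closing remark about the endomorphism algebra over $\RR$ being $\RR$, $\CC$, or $\HH$ correctly anticipates the discussion that follows the lemma in the paper.
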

Two irreducible representations are \emph{equivalent} if there exists an
invertible linear map $A\in \textup{Hom}_{G}(U,V)$. Schur's lemma tells us that
the only intertwiner between inequivalent irreducible representations of $G$ is
the zero map. It also tells us about the \emph{endomorphsim algebra} of an
irreducible representation $U$, i.e., $\textup{End}_G(U) :=
\textup{Hom}_G(U,U)$ consisting of the self-intertwiners. In general, if $U$
is irreducible over $\RR$ then the endomorphism algebra is a division algebra
over $\RR$. As such, by the Frobenius theorem~\cite{frobenius1878lineare}, 
it is isomorphic either to $\RR$, $\CC$, or $\HH$. 


The following variation on Schur's lemma tells us that the only self-adjoint maps from an irreducible
representation to itself that commute with the group action are the multiples of identity.
Throughout, we use the notation $\cS^U$ for the self-adjoint linear maps from an inner product space 
$U$ to itself. 
\begin{proposition}
	\label{prop:schur-sa}
	Let $U$ be a representation of a compact group $G$ over $\RR$ 
	equipped with a $G$-invariant inner product. 
	If $U$ is irreducible then $\textup{End}_G(U)\cap \cS^U = \{\lambda I: \lambda\in \RR\}$. 
\end{proposition}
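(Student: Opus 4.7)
The plan is to reduce to Schur's lemma (Lemma~\ref{lem:schur}) by using the fact that a self-adjoint operator on a real inner product space always has a real eigenvalue. Concretely, fix $A \in \textup{End}_G(U)\cap \cS^U$ and note that since $\lambda I \in \textup{End}_G(U)\cap \cS^U$ for every $\lambda \in \RR$, one containment is immediate; the substance is the reverse containment.

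For the reverse containment, I would first invoke the spectral theorem for self-adjoint operators on a finite-dimensional real inner product space to produce a real eigenvalue $\lambda$ of $A$. Then I would consider the operator $B := A - \lambda I$. Since $\textup{End}_G(U)$ is a subalgebra (in particular an $\RR$-linear subspace) of $\textup{End}_{\RR}(U)$ containing both $A$ and $I$, we have $B \in \textup{End}_G(U) = \textup{Hom}_G(U,U)$. By construction $B$ has a nonzero kernel, so $B$ is not invertible. Schur's lemma applied to $B$ then forces $B = 0$, i.e., $A = \lambda I$, which is what is required.

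The argument is short and the only subtle point is ensuring the hypotheses of Schur's lemma are really met: $U$ is irreducible by assumption, and $B$ genuinely lies in $\textup{Hom}_G(U,U)$ because the identity map intertwines the $G$-action with itself and linear combinations of intertwiners are intertwiners. I do not anticipate any serious obstacle; the only place one has to be careful is not to confuse this real statement with its complex counterpart, where one would instead use the existence of a complex eigenvalue and the conclusion $\textup{End}_G(U) = \CC \cdot I$. Here the appeal to self-adjointness (and hence to a \emph{real} eigenvalue) is exactly what lets us avoid passing through the division-algebra classification of $\textup{End}_G(U)$.
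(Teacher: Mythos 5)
Your proof is correct and essentially the same as the paper's: both use self-adjointness to produce a real eigenvalue $\lambda$ and then observe that $A - \lambda I$ has a nontrivial kernel. The only cosmetic difference is that the paper argues directly that this kernel is a nonzero invariant subspace and invokes irreducibility, whereas you package the same step as an appeal to Schur's lemma (Lemma~\ref{lem:schur}) applied to $A - \lambda I$.
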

\begin{proof}
	Suppose that $U$ is irreducible and let $A\in \textup{End}_G(U)\cap \cS^U$. Since
	$A$ is self-adjoint, all of its eigenvalues are real. Let $\lambda$ denote any eigenvalue
	of $A$. Let $V$ be the nullspace of $A-\lambda I$. Then $V$ is a non-zero (since it contains an eigenvector) 
	invariant subspace of $U$. Since $U$ is irreducible, $V=U$ and so $A=\lambda I$.
\end{proof}
Since we can identify self-adjoint maps from $U$ to $U$ with bilinear forms on $U$, Proposition~\ref{prop:schur-sa}
tells us that an irreducible representation has a unique (up to scaling) $G$-invariant bilinear form. 

One way to study the representations of a group is via their characters.
If $(V,\rho_V)$ is a representation of $G$ over $\RR$, the associated \emph{character}
is the function $\chi_{V}:G\rightarrow \RR$ defined by $\chi_V(g) = \tr(\rho_V(g))$. 
In particular, the dimension of the space of intertwiners can be computed by integrating characters 
with respect to Haar measure. 
\begin{lemma}
	\label{lem:chi-ip}
	If $(U,\rho_U)$ and $(V,\rho_V)$ are two representations of a compact group $G$ over $\RR$ then 
\[ \langle \chi_U,\chi_V\rangle := \int_{g\in G} \chi_U(g)\chi_V(h)\;d\mu(G) = \dim_{\RR}(\textup{Hom}_G(U,V)).\]
\end{lemma}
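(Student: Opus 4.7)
The plan is to realize $\textup{Hom}_G(U,V)$ as the $G$-fixed subspace of $\textup{Hom}(U,V)$ under a natural action, and to compute the dimension of this fixed subspace as the trace of an averaging projection, which by definition of the character turns into an integral over $G$.

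First, I would equip $\textup{Hom}(U,V)$ with the representation of $G$ given by $(g\cdot A)(u) := \rho_V(g)\,A\,\rho_U(g^{-1})u$. A direct check shows that $A\in \textup{Hom}(U,V)$ is fixed by this action if and only if $\rho_V(g)A=A\rho_U(g)$ for every $g\in G$, i.e., the fixed subspace is exactly $\textup{Hom}_G(U,V)$. Next, I would define the Reynolds-type operator
\[ P:\textup{Hom}(U,V) \to \textup{Hom}(U,V),\qquad P(A) := \int_{g\in G} g\cdot A\;d\mu_G(g), \]
which is well-defined because $G$ is compact and $\mu_G$ is finite. Using left/right invariance of Haar measure, $P$ is idempotent, its image equals $\textup{Hom}_G(U,V)$, and it acts as the identity on that subspace, so $P$ is a linear projection onto $\textup{Hom}_G(U,V)$. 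Consequently,
\[ \dim_{\RR}(\textup{Hom}_G(U,V)) = \textup{rank}(P) = \tr(P) = \int_{g\in G} \tr(A\mapsto g\cdot A)\,d\mu_G(g). \]

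The remaining step is to identify the integrand with $\chi_U(g)\chi_V(g)$. Using the canonical isomorphism $\textup{Hom}(U,V)\cong U^*\otimes V$, the action $A\mapsto g\cdot A$ corresponds to $\rho_{U^*}(g)\otimes \rho_V(g)$, whose trace factors as $\chi_{U^*}(g)\chi_V(g)$. Finally, since $G$ is compact I may assume $\rho_U$ is orthogonal with respect to a $G$-invariant inner product, so $\rho_U(g^{-1})=\rho_U(g)^T$ and therefore $\chi_{U^*}(g)=\chi_U(g^{-1})=\tr(\rho_U(g)^T)=\chi_U(g)$. Substituting this identification into the displayed integral gives the claim.

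The main (minor) obstacle is being careful that the averaging integral really does produce a projection onto the intertwiner space (as opposed to, say, a rescaling of it), and that one justifies interchanging trace with the Haar integral; both are routine consequences of compactness of $G$, finite-dimensionality of $\textup{Hom}(U,V)$, and continuity of $g\mapsto \rho_U(g),\rho_V(g)$. Apart from that, the argument is entirely standard and uses no structure beyond what has already been introduced in Section~\ref{sec:repthy}.
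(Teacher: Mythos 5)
The paper states this lemma without proof, treating it as a standard fact in the representation theory of compact groups. Your argument is correct and is exactly the canonical proof: realize $\textup{Hom}_G(U,V)$ as the fixed subspace of $\textup{Hom}(U,V)\cong U^*\otimes V$, observe that Haar averaging gives an idempotent onto that subspace so its dimension is the trace of the averaging operator, and use orthogonality of $\rho_U$ to identify $\chi_{U^*}$ with $\chi_U$, which is precisely what makes the formula take the stated ``unconjugated'' form over $\RR$.
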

If $(U,\rho_U)$ is a representation of $G$ over $\RR$ and $(V,\rho_V)$ is a representation of $H$ over $\RR$
then the \emph{(external) tensor product} $(U\otimes V,\rho_{U\otimes V})$ is a representation of $G\times H$ 
over $\RR$ defined by $(g,h)\cdot(u\otimes v) = (g\cdot u)\otimes (h\cdot v)$ and extending by bilinearity. 
The corresponding character is $\chi_{U\otimes V}(g,h) = \chi_{U}(g)\chi_V(h)$. Over $\CC$, the external 
tensor product of irreducible representations is always irreducible. When working over $\RR$, a little more 
care is required.
The following result will suffice for our purposes.
\begin{lemma}
	\label{lem:tp}
If $U$ and $V$ are irreducible representations of $G$ and $H$ respectively over
$\RR$, and $\dim_{\RR}\textup{End}_G(U) = \dim_{\RR}\textup{End}_H(V) = 1$, then 
$\dim_{\RR}\textup{End}_{G\times H}(U\otimes V) = 1$
and so $U\otimes V$ is an irreducible representation of $G\times H$ over $\RR$.
\end{lemma}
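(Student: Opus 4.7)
The plan is to compute $\dim_{\RR} \textup{End}_{G\times H}(U\otimes V)$ via the character formula of Lemma~\ref{lem:chi-ip}, and then to observe that any representation with a one-dimensional endomorphism algebra is automatically irreducible. Both steps are short, and the bulk of the work is bookkeeping about Haar measure on a product group.

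For the character computation, I would first note that on the compact group $G\times H$ the normalized Haar measure is the product measure $\mu_G\times\mu_H$ (this is immediate from uniqueness of the invariant probability measure). Combined with the product formula $\chi_{U\otimes V}(g,h) = \chi_U(g)\chi_V(h)$ and Fubini's theorem, applying Lemma~\ref{lem:chi-ip} with $U\otimes V$ in both arguments gives
\[
\dim_{\RR} \textup{End}_{G\times H}(U\otimes V) = \int_{G\times H} \chi_U(g)^2\chi_V(h)^2\,d\mu_G(g)\,d\mu_H(h) = \langle \chi_U,\chi_U\rangle_G\cdot \langle \chi_V,\chi_V\rangle_H.
\]
By Lemma~\ref{lem:chi-ip} applied separately to $G$ and $H$, each factor on the right equals $\dim_{\RR}\textup{End}_G(U)$ and $\dim_{\RR}\textup{End}_H(V)$ respectively, which are both $1$ by hypothesis. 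So the product is $1$, as claimed.

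To conclude irreducibility, I would prove the general fact that any finite-dimensional representation $W$ of a compact group $\Gamma$ over $\RR$ with $\dim_{\RR}\textup{End}_\Gamma(W) = 1$ is irreducible. Suppose for contradiction $W$ has a nonzero proper invariant subspace $W_1$. Equip $W$ with a $\Gamma$-invariant inner product (as recalled in Section~\ref{sec:repthy}); then $W_1^\perp$ is a nonzero invariant complement, and the orthogonal projection $P$ onto $W_1$ lies in $\textup{End}_\Gamma(W)$. Since $P$ is neither $0$ nor a scalar multiple of the identity, $\textup{End}_\Gamma(W)$ already contains the two linearly independent elements $I$ and $P$, contradicting the hypothesis. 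Applying this with $\Gamma = G\times H$ and $W = U\otimes V$ completes the proof. I do not anticipate any real obstacle — the only point that warrants care is the identification of Haar measure on $G\times H$ with the product of the Haar measures on $G$ and $H$, which is standard.
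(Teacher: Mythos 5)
Your proof is correct and takes essentially the same approach as the paper: the dimension of the endomorphism algebra is computed via the character inner product and the product Haar measure, and irreducibility is deduced by showing the orthogonal projector onto any invariant subspace must be $0$ or $I$. Your version simply spells out a few standard points (Fubini, uniqueness of Haar measure on the product) that the paper leaves implicit.
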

\begin{proof}
For the first part of the statement we compute
\begin{multline*}
\dim_{\RR}\textup{End}_{G\times H}(U\otimes V) = \langle \chi_{U\otimes V},\chi_{U\otimes V}\rangle = \\
\int_{G\times H} \chi_U(g)^2\chi_V(h)^2\;d\mu_G(g)d\mu_H(h) = \dim_{\RR}\textup{End}_G(U)\dim_{\RR}\textup{End}_H(V) = 1.\end{multline*}
For the second part of the statement, suppose that the endomorphism algebra $\textup{End}_{G\times H}(U\otimes V)$
is one-dimensional (and so contains only multiples of the identity) 
and let $W$ be an invariant subspace of $U\otimes V$.  Then the orthogonal projector onto $W$
is an element of the endomorphism algebra, and so must be a multiple of the identity. As such, the orthogonal
projector is either $0$ or $I$, implying that either $W = \{0\}$ or $W=U\otimes V$. 
\end{proof}

\subsection{Preliminaries on symmetry reduction}
\label{sec:symred-prelim}
In this section, we will briefly review symmetry reduction for semidefinite programming feasibility problems.

Let $V$ be a real inner product space and let $\cS^V\subseteq \textup{End}(V)$ 
denote the self-adjoint linear maps from $V$ to $V$. 
After choosing a basis the elements of $\cS^V$ can be identified with
symmetric $\dim(V)\times \dim(V)$ matrices. Let 
$\cS_+^V\subseteq \cS^V$ denote the cone of positive semidefinite elements of $\cS^V$, i.e., 
the elements $X\in \cS^V$ that satisfy $\langle v,Xv\rangle \geq 0$ for all $v\in V$.

A \emph{semidefinite feasibility problem} is a problem of the form
\begin{equation}
	\label{eq:sdp-feas}
	\textup{find}\;\; Y \in \cS_+^V\quad \textup{such that}\quad \mathcal{A}(Y)=b.
\end{equation}
where $\mathcal{A}:\cS^{V}\rightarrow W$ is a linear map and $b\in W$. 

\paragraph{Symmetry reduction for semidefinite feasibility problems}
Let $G$ be a compact group and let $V$ and $W$ be representations of $G$ 
over $\RR$ equipped with a $G$-invariant inner product. 
Then $\cS^V$ is also a representation of $G$ over $\RR$ via the action 
$Y \mapsto \rho_V(g) Y \rho_V(g)^\top$. If $\mathcal{A}\in \textup{Hom}_G(\cS^{V},W)$ and 
$g\cdot b = b$ for all $g\in G$ then we say that the semidefinite feasibility problem is \emph{$G$-invariant}.
This is because if $Y$ is feasible for~\eqref{eq:sdp-feas} then $\rho_V(g) Y\rho_V(g)^\top$ is also feasible 
for~\eqref{eq:sdp-feas} for all $g\in G$. 

If a semidefinite feasibility problem is $G$-invariant, then it is equivalent 
to the \emph{symmetry-reduced} feasibility problem
\begin{equation}
	\label{eq:sdp-feas-symred}
	\textup{find}\;\; Y\in \textup{End}_{G}(V) \cap \cS_+^V\quad\textup{such that}\quad \mathcal{A}(Y) = b.
\end{equation}
This is because if $Y\in \cS_+^V$ is feasible for~\eqref{eq:sdp-feas} then
\[ Y^G = \int_{g\in G} \rho_V(g) Y \rho_V(g)^\top\;d\mu_G(g) = 
\int_{g\in G} \rho_V(g) Y \rho_V(g^{-1})\;d\mu_G(g)\in \textup{End}_{G}(V)\cap \cS_+^V\]
and $\mathcal{A}(Y^G)=b$, so $Y^G$ is feasible for~\eqref{eq:sdp-feas-symred}. 

To proceed further, we need to consider how $V$ decomposes into irreducible representations. The general case 
(for finite groups) is discussed in~\cite{gatermann2004symmetry} in the language of invariant theory, and, for instance, in~\cite[Appendix 2]{raymond2018symmetric} in the 
language of representation theory.
For what follows, we need only focus on the very special case in which $V$ decomposes as a \emph{multiplicity-free} 
direct sum of inequivalent irreducible representations.
In this case, it follows from Lemma~\ref{lem:schur} and Proposition~\ref{prop:schur-sa} that 
$\textup{End}_G(V)\cap \cS^{V}$ is the span of the orthogonal projectors onto the inequivalent irreducible invariant subspaces
of $V$. Then the symmetry-reduced semidefinite feasibility problem~\eqref{eq:sdp-feas-symred} becomes 
a linear programming feasibility problem. Here, and throughout, we use the notation $\RR_+^k$ to denote the 
nonnegative orthant in $\RR^k$. 
\begin{proposition}
	\label{prop:sdp-symred}
	Suppose that $V$ is an orthogonal representation of a compact group $G$ over $\RR$, and 
	$V = \bigoplus_{i=1}^{k}V_i$ decomposes as a direct sum of inequivalent irreducible representations $V_i$ 
	of $G$ over $\RR$. If the semidefinite feasibility problem~\eqref{eq:sdp-feas} is $G$-invariant then it 
	is equivalent to
	\[ \textup{find}\;\; \lambda\in \RR_+^k\quad \textup{such that} \quad \sum_{i=1}^{k} \mathcal{A}(P_{V_i})\lambda_i = b\]
	where $P_{V_i}:V\rightarrow V$ is the orthogonal projector onto the subspace $V_i\subseteq V$. 
\end{proposition}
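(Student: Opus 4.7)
The plan is to combine the symmetry reduction already established in~\eqref{eq:sdp-feas-symred} with an explicit parametrization of $\textup{End}_G(V) \cap \cS_+^V$ that is available under the multiplicity-free hypothesis. The three steps are: (i) observe that the decomposition $V=\bigoplus_i V_i$ is automatically orthogonal, (ii) use Schur's lemma together with Proposition~\ref{prop:schur-sa} to parametrize $\textup{End}_G(V) \cap \cS^V$ by $\RR^k$, and (iii) read off positivity and the affine constraint in terms of this parameter.

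First I would show that, with respect to a $G$-invariant inner product on $V$, inequivalent irreducible invariant subspaces are automatically orthogonal. Indeed, for $i \neq j$ the orthogonal complement of $V_i$ in $V$ is $G$-invariant, so the orthogonal projector onto $V_i$ lies in $\textup{End}_G(V)$; its restriction to $V_j$ is then an element of $\textup{Hom}_G(V_j,V_i)$, which vanishes by Schur's lemma since $V_i$ and $V_j$ are inequivalent. Hence $V = \bigoplus_{i=1}^k V_i$ is an orthogonal direct sum, and the $P_{V_i}$ are pairwise orthogonal projections summing to the identity in $\cS^V$.

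Next I would characterize $\textup{End}_G(V) \cap \cS^V$. Any $Y \in \textup{End}_G(V)$ decomposes into blocks $P_{V_i} Y P_{V_j}\colon V_j \to V_i$, each an element of $\textup{Hom}_G(V_j,V_i)$. Schur's lemma forces the off-diagonal blocks ($i \neq j$) to be zero, so $Y$ is block diagonal. If moreover $Y$ is self-adjoint, each diagonal block lies in $\textup{End}_G(V_i) \cap \cS^{V_i}$, which by Proposition~\ref{prop:schur-sa} consists of scalar multiples of the identity on $V_i$. Thus $Y \in \textup{End}_G(V) \cap \cS^V$ if and only if $Y = \sum_{i=1}^k \lambda_i P_{V_i}$ for some $\lambda \in \RR^k$. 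Since the $P_{V_i}$ are mutually orthogonal projections summing to $I$, such a $Y$ is positive semidefinite exactly when $\lambda \in \RR_+^k$. Linearity of $\mathcal{A}$ then converts $\mathcal{A}(Y)=b$ into $\sum_{i=1}^k \mathcal{A}(P_{V_i})\lambda_i = b$, and combined with the equivalence between~\eqref{eq:sdp-feas} and~\eqref{eq:sdp-feas-symred} this yields the claim.

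The only step that takes any thought is the orthogonality of inequivalent irreducible subspaces; without it, the $P_{V_i}$ would be oblique projections and the characterization of positivity would be more delicate. Once orthogonality is secured, Schur's lemma and Proposition~\ref{prop:schur-sa} leave no freedom beyond real scalar multiples of the $P_{V_i}$, and positivity collapses to nonnegativity of each coefficient, so no further obstacle arises.
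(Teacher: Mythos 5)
Your proposal is correct and follows essentially the same route the paper sketches in the paragraph preceding the proposition: combine the reduction to~\eqref{eq:sdp-feas-symred} with Lemma~\ref{lem:schur} and Proposition~\ref{prop:schur-sa} to identify $\textup{End}_G(V)\cap\cS^V$ with the span of the $P_{V_i}$, and then read off positivity coordinatewise. The paper leaves this as an assertion without a displayed proof, and you have supplied the missing details cleanly, in particular the observation that inequivalent irreducible invariant subspaces are automatically orthogonal under a $G$-invariant inner product, which is what makes the $P_{V_i}$ genuine orthogonal projectors and turns positivity of $Y=\sum_i\lambda_iP_{V_i}$ into $\lambda\in\RR_+^k$.
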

\paragraph{Symmetry reduction for sum-of-squares feasibility problems}
Next we specialize to the specific semidefinite feasibility problem
that arises when we want to check that a given $G$-invariant form is a sum of squares. 
Let $p\in \RR[x_1,\ldots,x_n]_{2d}$ be a form of degree $2d$ in $n$ variables. 
Let $V=\RR^{\binom{n+d-1}{d}}$, which 
we think of as the space of coefficients of real forms of degree $d$ in $n$ variables with respect to some fixed basis. With any $c\in V$
we use $c(x)$ to denote the associated form in $\RR[x_1,x_2,\ldots,x_n]_d$.
Define a linear map $\mathcal{A}:\cS^V\rightarrow W$ on rank one elements by 
\[ \mathcal{A}(cc^\top) = c(x)^2\]
and then extend to all of $\cS^V$ by linearity of $\mathcal{A}$ and by 
using the fact that an arbitrary element of $\cS^V$ can be written as 
$X = \sum_{i=1}^{\ell} c_ic_i^\top - \sum_{j=\ell+1}^{\ell'}c_{j}c_{j}^\top$. 

A form $p\in \RR[x_1,x_2,\ldots,x_n]_{2d}$ is a sum of squares if and only if 
the following problem is feasible:
\begin{equation}
	\label{eq:sos-feasibility}
	\textup{find}\;\; Y\in \cS_+^V\quad\textup{such that}\quad \mathcal{A}(Y) = p(x).
\end{equation}

Suppose, now, that $G$ is a compact group that acts by orthogonal transformations on $\RR^n$. 
The space of forms of degree $d$ becomes a representation of $G$ over $\RR$ via $(g\cdot p)(x) = p(gx)$ for all $g\in G$. 
This induces a representation on $V$, which is orthogonal once we equip $V$ with an invariant inner product.
If $p\in \RR[x_1,\ldots,x_n]_{2d}$ is fixed by the action of $G$, then the sum-of-squares 
feasibility problem~\eqref{eq:sos-feasibility}
is $G$-invariant. Suppose that $V$ decomposes in a multiplicity-free way into inequivalent 
irreducible representations as $V = \bigoplus_{i=1}^k V_i$. For $i=1,2,\ldots,k$, let $(c_{ij})_{j=1}^{\dim(V_i)}$ be an 
orthonormal basis for $V_i$. Then Proposition~\ref{prop:sdp-symred}
tells us that $p$ is a sum of squares if and only if there exist $\lambda_1,\ldots,\lambda_k\in \RR_+$ such that 
\begin{equation}
	\label{eq:sos-symred}
	p(x) = \sum_{i=1}^{k} \lambda_i s_i(x)\qquad\textup{where}\qquad s_i(x) = \sum_{j=1}^{\dim(V_i)} c_{ij}(x)^2\quad\textup{for $i=1,2,\ldots,k$}.
\end{equation}
This follows directly from the fact that the orthogonal projectors $P_{V_i}$ 
from Proposition~\ref{prop:sdp-symred} can be 
written explicitly as $P_{V_i} = \sum_{j=1}^{\dim(V_i)} c_{ij}c_{ij}^\top$.

\section{Sums of squares}
\label{sec:notsos}
In this section we apply the general results of Section~\ref{sec:symred} to the specific convex feasibility problem 
arising from checking that the forms $q_{k}^\OO$ are sums of squares. We begin in Section~\ref{sec:oct-spin} 
by recalling basic facts about the octonions, 
and understanding how the group $\Spin(9)$ acts on pairs of octonions. Then, in Section~\ref{sec:symqk}, we rewrite 
these forms in a way that reveals that they are invariant under the action of $\Spin(9)\times O(k)$ on 
$\OO^{k}\times \OO^{k}\cong \RR^{16\times k}$. In Section~\ref{sec:qf-decomp}, we decompose the space of quadratic forms 
on $\RR^{16\times k}$ into irreducibles under the action of $\Spin(9)\times O(k)$. This is the crucial representation-theoretic 
calculation required to describe the (polyhedral) cone of $\Spin(9)\times O(k)$-invariant quartic forms that are sums of squares. 
Finally, in Section~\ref{sec:sos-infeas}, we will show that $q_{16}^{\OO}$ is a sum of squares and $q_{17}^\OO$ is not a sum of squares, 
by studying the resulting linear programming feasibility problem in each case. 

\subsection{Background on the octonions and $\Spin(9)$}
\label{sec:oct-spin}

\paragraph{Octonions}
The octonions are the normed division algebra of dimension $8$. They can be explicitly constructed as the span of $8$ units 
$e_0,e_1,\ldots,e_7$ (with $e_0$ being the identity) 
subject to a particular non-associative multiplication (see, e.g.,~\cite[Equation 1.5]{tian2000matrix} 
for one possible multiplication table arising from a particular labeling of the units).

If $u = a_0e_0 + \sum_{i=1}^{7}a_ie_i$ then its \emph{conjugate} is $\conj{u} = a_0e_0 - \sum_{i=1}^{7}a_ie_i$ and its 
\emph{real part} is $\textup{Re}(u) = a_0$. Define a real inner product on $\OO$ 
via $\langle u,v\rangle = \textup{Re}(\conj{u}v)$ and let $|u|^2 = \langle u,u\rangle = \conj{u}u$
denote the corresponding norm. If $u = \sum_{i=0}^{7}a_ie_i$ then we denote by $[u]\in \RR^8$ the coordinate vector 
of $u$, i.e., $[u]_i = a_i$ for $i=0,1,\ldots,7$. Using this notation, $\langle u,v\rangle = [u]^\top[v]$. 
The squared norm can also be expressed as
\begin{equation}
	\label{eq:norm-alt}|u|^2 = \sum_{i=0}^{7} \langle u,e_i\rangle^2.
\end{equation}
Furthermore, left (resp.\ right) multiplication by $a$ is the adjoint of left (resp.\ right) multiplication by $\conj{a}$, i.e.,
\begin{equation}
\label{eq:LR-adj}\langle ax,y\rangle = \langle x,\conj{a}y\rangle\quad\textup{and}\quad \langle xa,y\rangle = \langle x,y\conj{a}\rangle\end{equation}
for all $x,y,a\in \OO$~\cite[Equation 2]{kramer1998octonion}. 
Given $u\in \OO$, let $R_u\in \End_{\RR}(\OO)$ denote right multiplication by $u = \sum_{i=0}^{7}a_ie_i$.
The matrix representing $R_u$, in the sense that $[R_u][v] = [vu]$ is~\cite{tian2000matrix}
\[ [R_u] = \begin{bmatrix} 
a_0 & -a_1 & -a_2 & -a_3 & -a_4 & -a_5 & -a_6 & -a_7\\
a_1 & a_0 & a_3 & -a_2 & a_5 & -a_4 & -a_7 & a_6\\ 
a_2 & -a_3 & a_0 & a_1 & a_6 & a_7 & -a_4 & -a_5\\
a_3 & a_2 & -a_1 & a_0 & a_7 & -a_6 & a_5 & -a_4\\
a_4 & -a_5 & -a_6 & -a_7 & a_0 & a_1 & a_2 & a_3\\
a_5 & a_4 & -a_7 & a_6 & -a_1 & a_0 & -a_3 & a_2\\
a_6 & a_7 & a_4 & -a_5 & -a_2 & a_3 & a_0 & -a_1\\
a_7 & -a_6 & a_5 & a_4 & -a_3 & -a_2 & a_1 & a_0\end{bmatrix}\]
(where we have used the choice of multiplication table in~\cite{tian2000matrix}).
Observe that $[R_{e_0}] = I$ and that $[R_{\conj{u}}] = [R_u]^T$. Furthermore, $[R_u]$ is skew-symmetric whenever
$u$ is purely imaginary, i.e., whenever $\textup{Re}(u)=0$. 

\paragraph{A model for $\Spin(9)$}
Define the following collection of $16\times 16$ real matrices 
\[ S_i := \begin{bmatrix} 0 & R_{e_i}\\R_{\conj{e_i}} & 0\end{bmatrix}\quad\textup{for $i=0,1,\ldots,7$ and}\quad
	S_8 := \begin{bmatrix} I & 0\\0 & -I\end{bmatrix}.\]
		This collection of symmetric matrices form a \emph{Clifford system}~\cite{ferus1981cliffordalgebren}, 
		in the sense that 
\begin{equation}
	\label{eq:Srel}S_iS_j + S_jS_i = 2I \delta_{ij}\;\;\textup{for $0\leq i,j\leq 8$}.
\end{equation}
Let 
\begin{equation}
\label{eq:V1def}
V_1:=\textup{span}\{S_i\;:\; i=0,1,\ldots,8\}\subseteq \RR^{16\times 16}
\end{equation}
be the $9$-dimensional subspace spanned by the $S_i$. 
Let $G$ denote the subgroup of $SO(16)$ such that $gV_1 g^\top = V_1$. 
This group is isomorphic to $\Spin(9)$, the simply connected double-cover of $SO(9)$, and is generated by 
elements of the form
\[ \begin{bmatrix} v & R_u\\ R_{\conj{u}} & -v\end{bmatrix}\quad\textup{where $u\in \OO, v\in \RR$ and $|u|^2+v^2 =1$}\]
(see, for instance, Harvey~\cite[Lemma 14.77]{harvey1990spinors}). 
From this description of $\Spin(9)$ we see that 
$\OO\times \OO\cong \RR^{16}$ is a representation of $\Spin(9)$ where the action is via matrix-vector multiplication.
We also see that $\Spin(9)$ acts on $\RR^{16\times 16}$ via $Z\mapsto gZg^\top$. The subspace $V_1$ is an invariant 
subspace under this action. The homomorphism $\rho_{V_1}:\Spin(9)\rightarrow O(9)$ given by the representation 
has image $SO(9)$ and kernel $\{I,-I\}$, and so is the double covering map from $\Spin(9)$ to $SO(9)$.

\subsection{Symmetries of $q_k^\OO$}
\label{sec:symqk}
In this section we 
rewrite the forms $q_k^\OO$ in an alternative way that makes their invariance properties clear. Before doing so, 
we will identify a pair $(x,y)\in \OO^k\times \OO^k$ with a matrix $X\in \RR^{16\times k}$ of their coordinates, in such a way that 
\begin{equation}
	X = \begin{bmatrix} [x_1] & [x_2] & \cdots & [x_k] \\ [y_1] & [y_2] & \cdots & [y_k]\end{bmatrix}.\label{eq:id-xy}
\end{equation}
From now on, we will think of $q_k^\OO$ as a polynomial in $X\in \RR^{16\times k}$. The following result 
expresses $q_{k}^\OO$ in a way that makes its symmetries clear. Recall that if $U$ is a subspace of an inner product space $V$,
 we write $P_{U}:V\rightarrow V$ for the orthogonal projector onto $U$. Throughout, we will use the trace inner product on $\RR^{16\times k}$, i.e., $\tr(X^\top Y)$, which is invariant for the group actions we consider.  
\begin{lemma}
	\label{lem:qreform}
Under the identifications in~\eqref{eq:id-xy} we can write $q_{k}^\OO(X)$ as 
	\[ q_k^\OO(X) = \frac{1}{2}\tr(XX^\top)^2 - \frac{1}{4}\sum_{i=0}^{8}\tr(X^\top S_i X)^2 = \frac{1}{2}\tr(XX^\top )^2 - 4 \|P_{V_1}(X X^\top)\|_F^2.\]
\end{lemma}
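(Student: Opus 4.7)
The plan is to prove the two displayed equalities separately. For the first equality, I would start by identifying $\tfrac{1}{2}\tr(XX^\top)^2 = \tfrac{1}{2}(\|x\|^2+\|y\|^2)^2$ (since the entries of $X$ are exactly the coordinates of all the $x_i$ and $y_i$), so that the claim is equivalent to the polynomial identity
\[\sum_{i=0}^{8}\tr(X^\top S_i X)^2 \;=\; (\|x\|^2-\|y\|^2)^2 + 4|(x,y)|^2,\]
after a short expansion using $\CS_k^\OO(x,y)=\|x\|^2\|y\|^2-|(x,y)|^2$ and $(a+b)^2 = 2(a^2+b^2) - (a-b)^2 + \cdots$ style rearrangements with $a=\|x\|^2,\, b=\|y\|^2$.

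The $i=8$ contribution is immediate: writing $X = \begin{pmatrix} X_1 \\ X_2\end{pmatrix}$ with $X_1,X_2\in\RR^{8\times k}$ consisting of the coordinate vectors of the $x_j$ and $y_j$, one has $X^\top S_8 X = X_1^\top X_1 - X_2^\top X_2$, so $\tr(X^\top S_8 X) = \|x\|^2 - \|y\|^2$. For $0 \le i \le 7$, the block form gives $X^\top S_i X = X_1^\top R_{e_i} X_2 + X_2^\top R_{\conj{e_i}} X_1$, and since $[R_{\conj{e_i}}]=[R_{e_i}]^\top$ the two summands have equal trace. Evaluating $\tr(X_1^\top R_{e_i} X_2)$ column-by-column yields $\sum_j \langle x_j, y_j e_i\rangle = \sum_j \textup{Re}(\conj{x_j}(y_j e_i))$. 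Using the standard fact that $\textup{Re}(a(bc)) = \textup{Re}((ab)c)$ in the octonions (equivalently, the associator is purely imaginary) one concludes $\tr(X^\top S_i X) = 2\,\textup{Re}((x,y)e_i)$. Squaring and summing over $i=0,\dots,7$, together with the identity $\textup{Re}(u e_i) = \pm\langle u, e_i\rangle$ (with sign depending on whether $i=0$) and equation~\eqref{eq:norm-alt}, gives $\sum_{i=0}^{7}\tr(X^\top S_i X)^2 = 4|(x,y)|^2$, completing the first equality.

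For the second equality, I would verify that $\{S_0,\dots,S_8\}$ is a Frobenius-orthogonal basis of $V_1$ with $\|S_i\|_F^2 = 16$. The Clifford relations~\eqref{eq:Srel} give $S_i^2 = I$, hence $\tr(S_i^2)=16$, and for $i\ne j$ anticommutativity forces $\tr(S_iS_j) = -\tr(S_jS_i) = -\tr(S_iS_j)$, so $\tr(S_iS_j)=0$. Noting that $\langle XX^\top, S_i\rangle_F = \tr(S_i XX^\top) = \tr(X^\top S_i X)$, the orthogonal projection onto $V_1$ expands as $P_{V_1}(XX^\top) = \sum_{i=0}^{8}\tfrac{\tr(X^\top S_iX)}{16}S_i$, and Pythagoras gives $\|P_{V_1}(XX^\top)\|_F^2 = \tfrac{1}{16}\sum_{i=0}^{8}\tr(X^\top S_iX)^2$, which rearranges to the claim.

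The main obstacle is the computation for $0\le i\le 7$: translating the matrix trace $\tr(X^\top S_iX)$ into an intrinsic octonionic quantity requires one nonassociative step, namely $\textup{Re}(\conj{x_j}(y_j e_i)) = \textup{Re}((\conj{x_j}y_j)e_i)$. All remaining steps are bookkeeping with the block structure of $S_i$, the identity $[R_{\conj u}]=[R_u]^\top$, the Clifford relations, and~\eqref{eq:norm-alt}.
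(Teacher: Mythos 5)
Your proposal is correct and follows essentially the same route as the paper: both prove the first equality by reading off $\tr(XX^\top)=\|x\|^2+\|y\|^2$, $\tr(X^\top S_8X)=\|x\|^2-\|y\|^2$, reducing the $i\le 7$ terms to $|(x,y)|^2$ via~\eqref{eq:norm-alt} together with an octonionic rewrite of $\langle x_j,y_je_i\rangle$, and both obtain the second equality from the fact that $\{S_i\}$ is an orthogonal basis of $V_1$ with $\|S_i\|_F^2=16$. The only cosmetic difference is that you invoke the weak-associativity of the real part, $\textup{Re}(a(bc))=\textup{Re}((ab)c)$, explicitly, whereas the paper packages the same octonionic content inside the adjoint identity~\eqref{eq:LR-adj} quoted from Kr\"amer; the underlying argument is identical.
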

\begin{proof}
	Under the identifications in~\eqref{eq:id-xy} it is straightforward to check that
	\begin{align*}
		\|x\|^2+\|y\|^2 & = \sum_{j=1}^{k}\|[x_j]\|^2+\|[y_j]\|^2 =  \tr(XX^\top)\quad\textup{and}\\
		\|x\|^2-\|y\|^2  & = \sum_{j=1}^{k}\|[x_j]\|^2-\|[y_j]\|^2 = \tr(X^\top S_8 X).
	\end{align*}
	Using~\eqref{eq:norm-alt} and~\eqref{eq:LR-adj} we see that 
	\[ |(x,y)|^2 = \sum_{i=0}^{7}\langle \sum_{j=1}^{k}\conj{x_j}y_j,e_i\rangle^2 = \sum_{i=0}^{7}\sum_{j=1}^{k} \langle y_j,R_{e_i}x_j\rangle^2 = \frac{1}{4}\sum_{i=0}^{7} \tr(X^\top S_i X)^2.\]
	Combining these observations gives the first equality in the statement of the lemma. 
	For the second, we note that $S_i/4$ for $i=0,1,\ldots,8$ form an orthonormal 
basis for $V_1$ with respect to the trace inner product. Then $\sum_{i=0}^{8}\tr(X^\top S_i X)^2 = 16\sum_{i=0}^{8} \tr((S_i/4)X X^\top)^2 = 16\|P_{V_1}(XX^\top)\|_F^2$. 
\end{proof}
Recall that $\Spin(9)$ acts on $\RR^{16}$ as described at the end of Section~\ref{sec:oct-spin}. The orthogonal group $O(k)$ acts on 
$\RR^k$ via the defining representation. As such $\Spin(9)\times O(k)$ acts on $\RR^{16\times k}$, inducing an action on 
forms on $\RR^{16\times k}$ which fixes $q_k^\OO$.
\begin{lemma}
If $k\geq 2$ then $q_k^\OO(gXh^\top) = q_k^\OO(X)$ for all $g\in \Spin(9)$ and all $h\in O(k)$.
\end{lemma}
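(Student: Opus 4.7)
The plan is to use the reformulation from Lemma~\ref{lem:qreform}, namely
\[ q_k^\OO(X) = \tfrac{1}{2}\tr(XX^\top)^2 - 4\|P_{V_1}(XX^\top)\|_F^2, \]
and to check separately that each of the two summands is invariant under the action $X\mapsto gXh^\top$.

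The first observation is that because $h\in O(k)$, we have $h^\top h = I_k$, so $X'(X')^\top = gX(h^\top h)X^\top g^\top = g(XX^\top)g^\top$ when we set $X' = gXh^\top$. In other words, right multiplication by $h^\top$ drops out of the ``Gram'' matrix $XX^\top$, and the entire dependence on the group element collapses to conjugation by $g\in \Spin(9)$ on $\RR^{16\times 16}$. Consequently, the scalar $\tr(XX^\top)$ is unchanged by cyclicity of trace, and the first summand $\tfrac{1}{2}\tr(XX^\top)^2$ is manifestly invariant.

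For the second summand I would show that the projector $P_{V_1}$ is equivariant for the $\Spin(9)$-action $Z\mapsto gZg^\top$ on $\RR^{16\times 16}$, in the sense that $P_{V_1}(gZg^\top) = gP_{V_1}(Z)g^\top$. This follows from two facts recorded in Section~\ref{sec:oct-spin}: the subspace $V_1 = \mathrm{span}\{S_0,\ldots,S_8\}$ is invariant under this action (by definition of $\Spin(9)\subseteq SO(16)$ as its stabilizer), and the trace inner product on $\RR^{16\times 16}$ is preserved by conjugation by any orthogonal matrix. Since $P_{V_1}$ is the orthogonal projector onto a subspace that is preserved by an orthogonal $G$-action, it commutes with that action. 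Applying this with $Z = XX^\top$ and using that the Frobenius norm is invariant under conjugation by $g\in O(16)$ gives
\[ \|P_{V_1}(gXX^\top g^\top)\|_F^2 = \|gP_{V_1}(XX^\top)g^\top\|_F^2 = \|P_{V_1}(XX^\top)\|_F^2, \]
so the second summand is invariant as well. Combining the two yields $q_k^\OO(gXh^\top) = q_k^\OO(X)$.

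There is no real obstacle here; the content is entirely packaged into Lemma~\ref{lem:qreform} and the defining property of $\Spin(9)$ as the subgroup of $SO(16)$ stabilizing $V_1$. The only mildly subtle point is the equivariance of $P_{V_1}$, which would otherwise have to be verified by computing $gS_ig^\top$ explicitly; fortunately the abstract argument via orthogonality and invariance of $V_1$ bypasses any such calculation.
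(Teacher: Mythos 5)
Your proof is correct and follows the same route as the paper: reduce via Lemma~\ref{lem:qreform} to the two invariant forms $\tr(XX^\top)^2$ and $\|P_{V_1}(XX^\top)\|_F^2$, note that $h$ cancels in $XX^\top$, and use that the orthogonal projector onto an invariant subspace (with respect to an invariant inner product) commutes with the group action. The only cosmetic difference is that you spell out the equivariance of $P_{V_1}$ in a bit more detail, which the paper states as a one-line general fact.
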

\begin{proof}
	Clearly $\tr(gXh^\top hX^\top g^\top)^2 = \tr(XX^\top)^2$ for all $g\in \Spin(9)$ and $h\in O(k)$ 
	since $\Spin(9)\subseteq SO(16)$.
	Furthermore, $\|P_{V_1}(gXh^\top h X^\top g^\top)\|_F^2 = \|g P_{V_1}(XX^\top) g^\top\|_F^2 = \|P_{V_1}(XX^\top)\|_F^2$
	since the orthogonal projector (with respect to a $G$-invariant inner product) onto a $G$-invariant subspace always 
	commutes with the action of the group. Since $q^{\OO}_k$ is in the span of these two invariant forms, it is also fixed by the action of $\Spin(9)\times O(k)$.
\end{proof} 

\subsection{Decomposing quadratic forms under $\Spin(9)\times O(k)$}
\label{sec:qf-decomp}
In order to perform symmetry reduction on the problem of deciding whether $q_k^\OO$ is a sum of squares, we need to 
decompose the space of quadratic forms in $\RR^{16\times k}\cong \RR^{16}\otimes \RR^{k}$ under the action of $\Spin(9)\times O(k)$. 
This is equivalent to decomposing the symmetric tensor square of $\RR^{16\times k}$ into irreducibles. 
We will do this by first decomposing the full tensor square $\RR^{16\times k}\otimes \RR^{16\times k}$
into irreducibles and then restricting to the subspace of appropriately symmetric tensors. To achieve this, it is convenient to use the 
identification $\RR^{16\times k}\otimes \RR^{16\times k} \cong \RR^{16\times 16}\otimes \RR^{k\times k}$ 
and separately decompose $\RR^{16\times 16}$ under the action of $\Spin(9)$, and $\RR^{k\times k}$ under the action of $O(k)$. 

Although the decompositions we will need are classical, it is not so straightforward to find concrete references to the 
results needed in a form that is broadly accessible, particularly as we are working with representations over $\RR$. 
As such, we will sketch character-theoretic proofs of the 
results we need. The approach we take makes use of the following beautiful formula relating integrals over the (special) orthogonal 
group to moments of Gaussian random variables. This formula is due to Diaconis and Shahshahani~\cite{diaconis1994eigenvalues} 
(for a smaller range of $n$) and extended to the range given below for $O(k)$ by Stoltz~\cite[Theorem 3.4]{stolz2005diaconis}, 
and for the range given below for $SO(k)$ by Pastur and Vasilchuk~\cite{pastur2004moments}.
\begin{theorem}
	\label{thm:ds-formula}
	Fix a positive integer $r$ and let $(a_1,a_2,\ldots,a_r)$ be an $r$-tuple of nonnegative integers. 
	Let $Z_1,Z_2,\ldots,Z_r$ be independent normally distributed random variables such that 
	\[Z_j\sim \mathcal{N}((1+(-1)^j)/2,j)\quad\textup{for $j=1,2,\ldots,r$}.\]
	If $\sum_{j=1}^{r}ja_j \leq 2k$ then 
	\[ \int_{h\in O(k)} \prod_{j=1}^{r}\tr(h^j)^{a_j}\;d\mu(h) = \prod_{j=1}^r \EE[Z_j^{a_j}].\]
	If $\sum_{j=1}^{r}ja_j \leq k-1$ then 
	\[ \int_{h\in SO(k)} \prod_{j=1}^{r}\tr(h^j)^{a_j}\;d\mu(h) = \prod_{j=1}^r \EE[Z_j^{a_j}].\]
\end{theorem}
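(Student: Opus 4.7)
The plan is to reduce both sides to a combinatorial sum indexed by pairings (matchings) and verify that these sums agree in the stated stable range. Writing $\mu = (1^{a_1} 2^{a_2} \cdots)$ for the partition with $a_j$ parts of size $j$, the integrand $\prod_j \tr(h^j)^{a_j}$ is the power-sum symmetric polynomial $p_\mu$ evaluated at the eigenvalues of $h$. By orthogonality of characters, its Haar integral over $O(k)$ extracts the multiplicity of the trivial representation inside the virtual representation whose character on the maximal torus is $p_\mu$, and similarly for $SO(k)$.

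The first main step is to invoke Schur--Weyl--Brauer duality: if $d = \sum_j j a_j$, then $(\RR^k)^{\otimes d}$ is a bimodule for $O(k) \times B_d(k)$, where $B_d(k)$ is the Brauer algebra with parameter $k$, and the space of $O(k)$-invariants has an explicit basis indexed by Brauer diagrams (perfect matchings on $2d$ points). In the $O(k)$ stable range $d \leq 2k$, this algebra is semisimple with $k$-independent structure, and translating the product $p_\mu$ into this framework expresses the integral as a sum over matchings compatible with the cycle structure of $\mu$: each matched pair inside a cycle of length $j$ contributes a factor of $j$, and each cycle matched to itself contributes the constant $1$.

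For the right-hand side, I would apply Isserlis' theorem (Wick's formula) to expand $\prod_j \EE[Z_j^{a_j}]$ as a sum over pairings of the $\sum_j a_j$ variables, where each pair contributes the variance $j$ and each unpaired variable contributes the mean $(1+(-1)^j)/2$. This mean vanishes for odd $j$ and equals $1$ for even $j$, matching the fact that $\int_{O(k)} \tr(h^j)\,d\mu(h)$ is $1$ or $0$ according to the parity of $j$ for $k$ sufficiently large. A bijection between the Brauer matchings from the previous step and the Wick pairings --- where cycles of length $j$ correspond to groupings of the $a_j$ copies of $Z_j$ --- then finishes the identification of the two sides.

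The main obstacle is calibrating the stable range, especially for $SO(k)$. The bound $\sum_j j a_j \leq 2k$ in the $O(k)$ case is exactly where $B_d(k)$ becomes semisimple and the dimension counts stabilize to be $k$-independent. In the $SO(k)$ case, the determinantal relation $\det(h) = 1$ introduces additional $SO(k)$-invariants once the tensor degree is large enough to produce a copy of the top exterior power of the defining representation; showing that these new invariants do not contribute in the range $d \leq k-1$ requires a careful analysis of how irreducible $O(k)$-representations branch on restriction to $SO(k)$. Should this combinatorial bookkeeping become unwieldy, a cleaner fallback is to use the orthogonal Weingarten calculus to evaluate both sides as rational functions of $k$ and compare their stable limits directly.
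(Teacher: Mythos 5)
The paper does not prove this theorem; it is cited, with the $O(k)$ range due to Stolz~\cite{stolz2005diaconis} and the $SO(k)$ range to Pastur--Vasilchuk~\cite{pastur2004moments}, both extending Diaconis--Shahshahani~\cite{diaconis1994eigenvalues}. Your framework --- reduce the Haar integral to Brauer-diagram combinatorics and compare with the Wick expansion of the Gaussian moments --- is the right one and is close to what those references do, but one of your justifying steps is wrong. You assert that $d := \sum_j j a_j \leq 2k$ is ``exactly where $B_d(k)$ becomes semisimple.'' For integer parameter $k$, the Brauer algebra $B_d(k)$ is semisimple only for $k\geq d-1$ (Wenzl, Rui); for $d/2\leq k<d-1$ it is \emph{not} semisimple, yet the moment identity still holds. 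What actually governs the threshold is a different fact: the $(d-1)!!$ pair-contraction tensors in $(\RR^k)^{\otimes d}$ (for $d$ even) always span the $O(k)$-invariants, and are linearly independent precisely when $d\leq 2k$ --- equivalently, the surjection $B_{d/2}(k)\twoheadrightarrow\End_{O(k)}((\RR^k)^{\otimes d/2})$ is injective iff $k\geq d/2$. That sharp form of the second fundamental theorem for $O(k)$, not semisimplicity of the abstract diagram algebra, is what makes the trace computation $k$-independent; your reasoning as written would only cover the narrower range $d\lesssim k$.

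Beyond this, the proposal remains a sketch. The claimed matching-to-Wick-pairing correspondence (a factor $j$ per matched pair of $j$-cycles, and the mean $(1+(-1)^j)/2$ per unmatched $j$-cycle) is asserted, not derived; carrying it out requires actually evaluating $\tr(\Pi P_\sigma)$, where $\Pi$ projects onto the invariants and $P_\sigma$ permutes tensor factors by a permutation of cycle type $\mu$. The $SO(k)$ case you explicitly leave open; the key observation is that the Pfaffian-type $SO(k)$-invariants have degree $k$, so for $d\leq k-1$ the $SO(k)$- and $O(k)$-invariants in $(\RR^k)^{\otimes d}$ coincide, hence the two averaging projections are the same operator and the two integrals agree. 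The Weingarten fallback you offer is viable but essentially amounts to reproducing the cited proofs.
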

\paragraph{Decomposing $\RR^{k\times k}$ under $O(k)$}
Recall that $O(k)$ acts on $\RR^{k}\otimes \RR^{k}\cong \RR^{k\times k}$ via $X \mapsto hXh^\top$. Let $U_0$ denote the span of the $k\times k$ identity matrix, $U_{1}$ the space of $k\times k$ traceless symmetric matrices, and $U_{-1}$ the space of $k\times k$ 
skew-symmetric matrices. These are clearly invariant subspaces that span $\RR^{k\times k}$. 
\begin{proposition}
	\label{prop:Ok-rep}
	If $k\geq 2$ then the subspaces $U_{j}$ for $j=-1,0,1$ are inequivalent irreducible representations for the action 
of $O(k)$ on $\RR^{k\times k}$ with $\dim_{\RR}(\textup{End}_{O(k)}(U_j))=1$.
\end{proposition}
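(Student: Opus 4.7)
The plan is character-theoretic, building on Lemma~\ref{lem:chi-ip} and Theorem~\ref{thm:ds-formula}. The action of $h\in O(k)$ on $\RR^{k\times k}$ by $X\mapsto hXh^\top$ is the tensor square of the defining representation on $\RR^k$, and so has character $\tr(h)^2$. Decomposing into symmetric and skew-symmetric parts, and then isolating the line spanned by the identity inside the symmetric part, yields
\[ \chi_{U_0}(h) = 1,\quad \chi_{U_{1}}(h) = \tfrac{1}{2}(\tr(h)^2+\tr(h^2))-1,\quad \chi_{U_{-1}}(h) = \tfrac{1}{2}(\tr(h)^2-\tr(h^2)). \]

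To establish the proposition, I would verify that $\langle \chi_{U_i},\chi_{U_j}\rangle = \delta_{ij}$ for all $i,j\in\{-1,0,1\}$. By Lemma~\ref{lem:chi-ip}, the diagonal identities give $\dim_{\RR}\End_{O(k)}(U_j)=1$; irreducibility of $U_j$ then follows from the same orthogonal-projector argument used in the proof of Lemma~\ref{lem:tp}, since any nontrivial invariant subspace would yield a self-adjoint endomorphism not of the form $\lambda I$, contradicting Proposition~\ref{prop:schur-sa}. The off-diagonal vanishing, combined with Schur's lemma, gives pairwise inequivalence.

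Expanding products of the three characters produces $\RR$-linear combinations of the monomials $1, \tr(h)^2, \tr(h^2), \tr(h)^4, \tr(h)^2\tr(h^2), \tr(h^2)^2$, each of weight $a+2b\leq 4\leq 2k$ since $k\geq 2$. Theorem~\ref{thm:ds-formula} then factors each integral as $\EE[Z_1^a]\EE[Z_2^b]$ with $Z_1\sim\mathcal{N}(0,1)$ and $Z_2\sim\mathcal{N}(1,2)$, reducing the computation to the moments $\EE[Z_1^2]=1$, $\EE[Z_1^4]=3$, $\EE[Z_2]=1$, and $\EE[Z_2^2]=3$. As representative checks, $\langle \chi_{U_{-1}},\chi_{U_{-1}}\rangle = \tfrac{1}{4}(3-2+3)=1$ and $\langle \chi_{U_1},\chi_{U_{-1}}\rangle = \tfrac{1}{4}(3-3)-\tfrac{1}{2}(1-1)=0$; the other four cases are entirely analogous.

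There is no real obstacle beyond bookkeeping. The hypothesis $k\geq 2$ is used in exactly one place, namely to guarantee the weight bound $2k\geq 4$ required to apply Theorem~\ref{thm:ds-formula} to the degree-four monomials $\tr(h)^4$ and $\tr(h^2)^2$; everything else is uniform in $k$.
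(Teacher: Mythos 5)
Your proof is correct and takes essentially the same character-theoretic approach as the paper: write down the characters of $U_0$, $U_1$, $U_{-1}$, reduce the relevant inner products to Gaussian moments via Theorem~\ref{thm:ds-formula}, and conclude via Lemma~\ref{lem:chi-ip}. The paper only records the diagonal computations $\|\chi_j\|^2=1$ (the off-diagonal vanishing then follows since $\|\chi_0+\chi_1+\chi_{-1}\|^2=\int\tr(h)^4=3$); you verify the off-diagonals directly, which amounts to the same thing.
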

\begin{proof}
	Let $\chi_0$, $\chi_{1}$ and $\chi_{-1}$ denote the character of $U_0$, $U_1$, and $U_{-1}$, respectively. 
	By Lemma~\ref{lem:chi-ip} it suffices to establish that $\|\chi_{0}\|^2 = \|\chi_{1}\|^2 = \|\chi_{-1}\|^2 = 1$. This is 
	a straightforward computation using Theorem~\ref{thm:ds-formula} and the fact that 
	$\chi_0(h)^2 = 1$, $\chi_1(h)^2 = [(\tr(h)^2 + \tr(h^2))/2-1]^2$ and $\chi_{-1}(h)^2 = [(\tr(h)^2 - \tr(h^2))/2]^2$. 
\end{proof}
\paragraph{Decomposing $\RR^{16\times 16}$ under $\Spin(9)$}
Recall that $\Spin(9)$ acts on $\RR^{16}\otimes \RR^{16}\cong \RR^{16\times 16}$ via $X\mapsto gXg^\top$. 
Given $J \subseteq \{0,1,\ldots,8\}$ with elements $J = \{j_1,j_2,\ldots,j_k\}$ 
satisfying $j_1<j_2<\cdots <j_k$, define 
\[S_{J} = S_{j_1}S_{j_2}\cdots S_{j_k}\]
if $J$ is non-empty, and $S_{\emptyset} = I$. The following result summarizes the key properties of these matrices that we will need. These follow from the relations~\eqref{eq:Srel} that define a Clifford system.
\begin{proposition}
	\label{prop:S-prop}
	The matrices $S_J$ for $|J|\in\{0,1,2,3,4\}$ form an orthonormal basis for $\RR^{16\times 16}$. Moreover, 
	$S_J$ is symmetric if $|J|\in \{0,1,4\}$ and $S_J$ is skew-symmetric if $j\in \{2,3\}$. 
\end{proposition}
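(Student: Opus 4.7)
The plan is to establish all three assertions --- spanning, orthonormality, and the symmetry/skew-symmetry pattern --- directly from the Clifford relations~\eqref{eq:Srel} and the fact that each $S_i$ is symmetric. The starting observation is that repeatedly applying $S_iS_j = -S_jS_i$ (for $i\neq j$) and $S_i^2 = I$ lets one reduce any word in the generators $S_0,\ldots,S_8$ to $\pm S_K$, where $K$ is the set of indices that appear an odd number of times in the word. In particular, for any two subsets $J,J' \subseteq \{0,\ldots,8\}$ one gets $S_J^\top S_{J'} = \pm S_{J\triangle J'}$, where $S_J^\top$ is just $S_J$ with the factors written in reverse order.

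The core step is then to prove that $\tr(S_K) = 0$ whenever $\emptyset \neq K \subseteq \{0,\ldots,8\}$ with $|K| \leq 8$. I would produce an index $j$ such that $S_jS_K = -S_KS_j$; cyclicity of the trace together with $S_j^2 = I$ then gives $\tr(S_K) = \tr(S_jS_KS_j) = -\tr(S_K)$, forcing $\tr(S_K) = 0$. A short Clifford computation shows that $S_j$ and $S_K$ anticommute precisely when $|K|$ is odd and $j \notin K$, or when $|K|$ is even and $j \in K$. Since $0 < |K| \leq 8 < 9$, at least one of these two options is always available. Combined with $S_J^\top S_J = I$ (positive semidefinite, so the sign obtained by Clifford reduction must be $+$), this establishes that the family $\{S_J\}$ is pairwise orthogonal with constant Frobenius norm $4$, and hence orthonormal in the rescaled trace inner product used in Section~\ref{sec:oct-spin}.

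Spanning then follows from a dimension count: $\sum_{k=0}^{4}\binom{9}{k} = 1+9+36+84+126 = 256 = \dim_\RR \RR^{16\times 16}$, so a pairwise orthogonal family of nonzero matrices of this size is automatically a basis. For the symmetry/skew-symmetry assertion, transposition reverses an ordered product, and reversing a length-$k$ word in pairwise anticommuting factors incurs $\binom{k}{2}$ swaps; each contributes a sign of $-1$, so $S_J^\top = (-1)^{k(k-1)/2}S_J$. This sign equals $+1$ exactly for $k \in \{0,1,4\}$ and $-1$ for $k \in \{2,3\}$, matching the stated pattern.

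The main obstacle is not substantial --- everything is bookkeeping inside the Clifford algebra --- but it is easy to drop a sign while reducing $S_J^\top S_{J'}$ to $\pm S_{J\triangle J'}$, and the parity case split that produces the anticommuting $S_j$ in the trace-vanishing step needs to be written out explicitly so that the condition $|K|\leq 8$ is seen to be exactly what guarantees such a $j$ exists.
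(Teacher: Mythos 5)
Your proof is correct, and on the key orthogonality step it takes a genuinely different route from the paper. Both proofs agree on the symmetry/skew-symmetry classification (reversing a length-$k$ word costs $\binom{k}{2}$ sign flips) and on the dimension count. The divergence is in how one shows $\tr(S_J^\top S_{J'})=0$ for $J\neq J'$, which reduces in both cases to showing $\tr(S_K)=0$ for $K = J\triangle J'$ nonempty with $|K|\leq 8$. The paper handles $1\leq |K|\leq 4$ case by case ($|K|=1$ by inspection, $|K|\in\{2,3\}$ by skew-symmetry, $|K|=4$ by factoring $S_K$ as a symmetric times a skew-symmetric matrix) and then disposes of $5\leq|K|\leq 8$ via the complementation identity $\tr(S_K)=0 \iff \tr(S_{K^c})=0$, which requires the \emph{direct verification} that $S_0S_1\cdots S_8=-I$. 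Your argument instead produces, for every $K$ with $0<|K|\leq 8$, an index $j$ with $S_jS_K=-S_KS_j$ (namely $j\notin K$ when $|K|$ is odd, $j\in K$ when $|K|$ is even), and concludes $\tr(S_K)=\tr(S_jS_KS_j)=-\tr(S_K)=0$. This is a single uniform argument that works for all $1\leq|K|\leq 8$, uses only the abstract Clifford relations~\eqref{eq:Srel} and cyclicity of the trace, and avoids the explicit product computation entirely. What the paper's route buys instead is that it never needs to verify the anticommutation parity rule; it leans on structural facts (symmetric times skew-symmetric has trace zero) that are arguably more immediate. Your version is somewhat cleaner and more self-contained; either is fine. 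One small cosmetic point that applies to both you and the paper: the $S_J$ have Frobenius norm $4$, so strictly speaking the family is orthogonal rather than orthonormal unless one rescales; you at least flag this, whereas the paper's proof silently establishes only orthogonality.
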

\begin{proof}
	Clearly $S_{\emptyset} = I$ is symmetric and the matrices $S_j$ are symmetric by construction. 
	If $J = \{j_1,j_2,\ldots,j_k\}\subseteq \{0,1,\ldots,8\}$ with $j_1<j_2<\cdots < j_k$ then 
	\[ S_J^\top = S_{j_k}^\top S_{j_{k-1}}^\top \cdots S_{j_1}^\top = S_{j_k}S_{j_{k-1}} \cdots S_{j_1} = 
	(-1)^{\binom{k}{2}}S_{j_1}S_{j_2} \cdots S_{j_k} = (-1)^{\binom{k}{2}}S_J\]
	where we have reversed the order of the terms in the product in the last equality by
	applying~\eqref{eq:Srel} $\binom{k}{2}$ times. Since
	$\binom{k}{2}$ is odd if $k=2,3$, $S_{J}$ is skew-symmetric in these cases. 
	Since $\binom{k}{2}$ is even if $k=4$, $S_J$ is symmetric in this case.

 	We now show that the $S_J$ are mutually orthogonal for $|J|\in \{0,1,\ldots,4\}$. 
	By direct verification, we can check that $S_0S_1\cdots S_8 = -I$. Using this, together with~\eqref{eq:Srel}, 
	it follows that 
	\[ \tr(S_J^\top S_{J'}) = 0\quad\iff\quad \tr(S_{J \triangle J'}) = 0
	\quad\iff\quad \tr(S_{(J\triangle J')^c}) = 0\]
	where $J^c = \{0,1,\ldots,8\}\setminus J$ is the complement of $J$ and $J\triangle J'$ is the 
	symmetric difference of $J$ and $J'$.
	As such, to show that the $S_J$ are mutually orthogonal, it is enough to show that 
	$\tr(S_J)=0$ whenever $1\leq |J|\leq 4$. This clearly holds when $|J|=1$ because the $S_j$ have trace zero.
	This also holds for $k=2,3$ since in these cases $S_J$ is skew-symmetric.
	Finally, this holds for $k=4$ since if $J = \{j_1,j_2,j_3,j_4\}$ then $S_{j_1}$ is 
	symmetric and $S_{j_2}S_{j_3}S_{j_4}$ is skew-symmetric and so $\tr(S_J) = 0$. 

	That the $S_J$ for $|J|\in \{0,1,2,3,4\}$ span $\RR^{16\times 16}$ then follows from the fact that 
	$16^2 = 256 = 1+9+36+84+126 = \binom{9}{0} + \binom{9}{1} + \binom{9}{2}+ \binom{9}{3}+\binom{9}{4}$. 
\end{proof}
Define the subspaces $V_{k} \subseteq \RR^{16\times 16}$ for $k=0,1,2,3,4$ by
\[ V_{k} := \textup{span}\{S_{J}\;:\; J\subseteq \{0,1,\ldots,8\},\; |J|=k\}.\]
We have that $\dim(V_k) = \binom{9}{k}$ and that the $V_k$ are mutually orthogonal, spanning $\RR^{16\times 16}$. Moreover, 
$V_0,V_1$ and $V_4$ are subspaces of symmetric matrices and $V_2$ and $V_3$ are subspaces of skew-symmetric matrices.
\begin{proposition}
	\label{prop:spin9-rep}
The subspaces $V_k$ for $k=0,1,2,3,4$ are inequivalent irreducible representations for the action of $\textup{Spin}(9)$ on 
	$\RR^{16\times 16}$ with $\dim_{\RR}\textup{End}_{\Spin(9)}(V_k) = 1$.
\end{proposition}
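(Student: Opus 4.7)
The plan is to follow the same character-theoretic strategy used in the proof of Proposition~\ref{prop:Ok-rep}. The key preliminary step is to identify the $\Spin(9)$-representation $V_k$ with a standard exterior power. By construction, conjugation by $g\in\Spin(9)$ sends $S_i\mapsto gS_ig^\top = \sum_{j=0}^{8} A_{ji}S_j$ where $A=\rho_{V_1}(g)\in SO(9)$, so the action of $\Spin(9)$ on any $V_k$ factors through $SO(9)$. Because the $S_i$ anticommute for distinct indices (by the Clifford relations~\eqref{eq:Srel}), expanding
\[
gS_Jg^\top = \prod_{i=1}^{k}\Bigl(\sum_{\ell=0}^{8}A_{\ell,j_i}S_{\ell}\Bigr)
\]
and collecting in the orthonormal basis $\{S_L\}_{|L|=k}$ from Proposition~\ref{prop:S-prop} produces exactly the $k\times k$ minors $\det(A|_{L,J})$ as the coefficients. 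Thus $V_k$ is equivalent as an $SO(9)$-representation to the exterior power $\Lambda^{k}\RR^{9}$, and $\chi_{V_k}(g) = e_k(\lambda_1,\ldots,\lambda_9)$ equals the $k$-th elementary symmetric polynomial in the eigenvalues of $\rho_{V_1}(g)$.

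The second step is the character calculation. By Lemma~\ref{lem:chi-ip} together with Proposition~\ref{prop:schur-sa}, it suffices to show that $\langle \chi_{V_k},\chi_{V_{k'}}\rangle = \delta_{k,k'}$ for all $k,k'\in\{0,1,2,3,4\}$. Since the action factors through $SO(9)$, these inner products reduce to integrals over $SO(9)$. By Newton's identities, $e_k$ is a polynomial of weighted degree $k$ in the power sums $p_j = \tr(\rho_{V_1}(g)^j)$, so $\chi_{V_k}\chi_{V_{k'}}$ has weighted degree $k+k'\leq 8 = 9-1$, and Theorem~\ref{thm:ds-formula} (applied to $SO(9)$) converts each integral into a product of moments of the independent Gaussians $Z_j\sim\mathcal{N}((1+(-1)^j)/2,j)$. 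Direct evaluation then yields the claimed identities; for instance, with $e_2 = (p_1^2-p_2)/2$ one computes $\|\chi_{V_2}\|^2 = \tfrac{1}{4}(\EE[Z_1^4] - 2\EE[Z_1^2]\EE[Z_2] + \EE[Z_2^2]) = \tfrac{1}{4}(3-2+3) = 1$, and analogous calculations handle the remaining pairs.

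The hard part will be the first step: verifying cleanly that anticommutation of the $S_i$ forces the $k!$ cross-terms in $\prod_i\bigl(\sum_\ell A_{\ell,j_i}S_\ell\bigr)$ to antisymmetrize into precisely the $k\times k$ minors of $A$, so that $V_k\cong \Lambda^k\RR^9$ as $SO(9)$-representations. This is a standard consequence of Clifford algebra theory but deserves explicit verification in this concrete setup. After that, the remaining character computations are a mechanical (if tedious) exercise in Newton's identities and Gaussian moments, directly parallel to Proposition~\ref{prop:Ok-rep}. An alternative route would be to invoke the classical fact that $\Lambda^k\RR^9$ is irreducible over $\RR$ under $SO(9)$ for $0\leq k\leq 4$, bypassing the character calculation entirely; the character approach is preferable here because it keeps the argument self-contained and stylistically consistent with the rest of the section.
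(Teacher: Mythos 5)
Your proof takes essentially the same route as the paper: identify $V_k$ with $\Lambda^k V_1 \cong \Lambda^k\RR^9$ as a $\Spin(9)$-representation factoring through $SO(9)$, express the characters via Newton's identities as polynomials in the power sums $\tr(\rho_{V_1}(g)^j)$, and evaluate the resulting character inner products with Theorem~\ref{thm:ds-formula} applied to $SO(9)$. The only cosmetic differences are that you establish inequivalence by computing the pairwise inner products $\langle\chi_{V_k},\chi_{V_{k'}}\rangle=0$ rather than observing that the $\Lambda^k V_1$ have distinct dimensions, and that you make explicit the check (left implicit in the paper) that the weighted degree $k+k'\leq 8 = 9-1$ stays within the range of validity of Theorem~\ref{thm:ds-formula}.
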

\begin{proof}
	Recall that the representation $(V_1,\rho)$ of $\Spin(9)$ gives rise to a surjective homomorphism 
	$g\mapsto \rho(g)$ from $\Spin(9)$ to $SO(9)$ with kernel $\{I,-I\}$. The Haar measure on $\Spin(9)$
	pushes forward to the Haar measure on $SO(9)$ under this action.

	Let $\wedge^k V_1$ be the $k$th anti-symmetric tensor power of $V_1$.
	If $I = \{i_1,\ldots,i_k\}$ then the map 
	$S_I \mapsto S_{i_1}\wedge \cdots \wedge S_{i_k}$ gives an isomorphism 
	(of $\Spin(9)$ representations) between 
	the subspace $V_k$ and $\wedge^{k}V_1$ for $k=0,1,2,3,4$. 

	The $\wedge^k V_1$ are inequivalent irreducible 
	representations of $\Spin(9)$ with $\dim_{\RR}(\textup{End}_{\Spin(9)}(\wedge^k V_1)) = 1$ for all $k$. To see this,
	we first note that they are 
clearly invariant, and are inequivalent because they all have different dimensions. To see why they are irreducible, 
	we note that the character $\chi_k(h)$ of $\wedge^kV_1$ is the elementary symmetric polynomial in the eigenvalues of $\rho(h)$.
	Using the Newton identities, we can 
	express this character as a polynomial in the power sum symmetric functions $p_k(\lambda(\rho(h)) = \tr(\rho(h)^k)$. 
	Then by pushing forward the Haar measure on $\Spin(9)$ to the Haar measure on $SO(9)$ and applying 
	Theorem~\ref{thm:ds-formula} 
	we can check that $\|\chi_k\|^2=1$ for $k=0,1,2,3,4$. 
\end{proof}
We now combine the decomposition of $\RR^{16\times 16}$ into $\Spin(9)$ irreducibles, and the decomposition of $\RR^{k\times k}$ into $O(k)$ irreducibles to decompose $\RR^{16\times k}\otimes \RR^{16\times k}$ into irreducibles.
\begin{proposition}
	\label{prop:tensor-decomp}
	If $k\geq 2$, the space $\RR^{16\times k}\otimes \RR^{16\times k}\cong \RR^{16\times 16}\otimes \RR^{k\times k}$ decomposes under the action 
	$(g,h)\cdot(X\otimes Y) = gXh^\top \otimes gYh^\top$ of $\Spin(9)\times O(k)$ as
	\[ \RR^{16\times 16}\otimes \RR^{k\times k} = \bigoplus_{\substack{i\in \{0,1,2,3,4\}\\j\in \{-1,0,1\}}} 
	(V_i\otimes U_j)\]
	where each of the $V_i\otimes U_j$ are inequivalent irreducible representations of $\Spin(9)\times O(k)$. 
\end{proposition}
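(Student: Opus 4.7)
The plan is to combine the single-factor decompositions from Propositions~\ref{prop:Ok-rep} and~\ref{prop:spin9-rep} via Lemma~\ref{lem:tp}, and then verify inequivalence via characters.

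First, since $\RR^{16\times 16} = \bigoplus_{i=0}^{4} V_i$ as an orthogonal direct sum of $\Spin(9)$-invariant subspaces and $\RR^{k\times k} = \bigoplus_{j\in\{-1,0,1\}}U_j$ as an orthogonal direct sum of $O(k)$-invariant subspaces, the tensor product splits as $\RR^{16\times 16}\otimes \RR^{k\times k} = \bigoplus_{i,j}(V_i\otimes U_j)$. Under the identification $\RR^{16\times k}\otimes \RR^{16\times k}\cong \RR^{16\times 16}\otimes \RR^{k\times k}$, the given $\Spin(9)\times O(k)$ action on $X\otimes Y \mapsto gXh^\top \otimes gYh^\top$ corresponds exactly to the external tensor product action $(g,h)\cdot(A\otimes B) = (gAg^\top)\otimes (hBh^\top)$ on $\RR^{16\times 16}\otimes \RR^{k\times k}$, so each summand $V_i\otimes U_j$ is invariant.

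Next I would invoke Lemma~\ref{lem:tp}. Propositions~\ref{prop:Ok-rep} and~\ref{prop:spin9-rep} give $\dim_{\RR}\End_{\Spin(9)}(V_i) = 1$ and $\dim_{\RR}\End_{O(k)}(U_j) = 1$ for every $i$ and $j$; hence Lemma~\ref{lem:tp} yields that $V_i\otimes U_j$ is an irreducible representation of $\Spin(9)\times O(k)$ over $\RR$ with a one-dimensional endomorphism algebra.

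For inequivalence, I would compute with characters: the character of $V_i\otimes U_j$ evaluated at $(g,h)$ is $\chi_{V_i}(g)\chi_{U_j}(h)$, so by Lemma~\ref{lem:chi-ip} and Fubini,
\[
\dim_{\RR}\textup{Hom}_{\Spin(9)\times O(k)}(V_i\otimes U_j,\, V_{i'}\otimes U_{j'}) = \langle\chi_{V_i},\chi_{V_{i'}}\rangle\,\langle\chi_{U_j},\chi_{U_{j'}}\rangle.
\]
Since the $V_i$ are pairwise inequivalent $\Spin(9)$-irreducibles with one-dimensional endomorphism algebras, Schur's lemma (Lemma~\ref{lem:schur}) together with Lemma~\ref{lem:chi-ip} gives $\langle\chi_{V_i},\chi_{V_{i'}}\rangle = \delta_{ii'}$, and similarly $\langle\chi_{U_j},\chi_{U_{j'}}\rangle = \delta_{jj'}$. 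Hence the Hom-space is zero unless $(i,j)=(i',j')$, establishing inequivalence.

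There is no serious obstacle here: the substantive representation-theoretic work was already carried out in establishing the two single-factor decompositions, and this proposition is essentially a bookkeeping application of Lemma~\ref{lem:tp} together with the multiplicativity of characters on external tensor products. The only thing to be slightly careful about is ensuring that the hypothesis $\dim_{\RR}\End = 1$ (needed for Lemma~\ref{lem:tp}) is available on both factors, which is exactly what Propositions~\ref{prop:Ok-rep} and~\ref{prop:spin9-rep} provide.
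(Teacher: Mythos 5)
Your proof is correct and takes essentially the same approach as the paper, which states only that the result ``follows from Propositions~\ref{prop:Ok-rep} and~\ref{prop:spin9-rep} and Lemma~\ref{lem:tp}.'' You have simply spelled out the details the paper leaves implicit: the compatibility of the group action with the identification $\RR^{16\times k}\otimes\RR^{16\times k}\cong\RR^{16\times 16}\otimes\RR^{k\times k}$, the application of Lemma~\ref{lem:tp} for irreducibility, and the character argument (using Lemma~\ref{lem:chi-ip} and the multiplicativity of characters on external tensor products) for pairwise inequivalence, the latter being a useful clarification since Lemma~\ref{lem:tp} alone gives irreducibility but not inequivalence.
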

\begin{proof}
	This follows from Propositions~\ref{prop:Ok-rep} and~\ref{prop:spin9-rep} and Lemma~\ref{lem:tp}.
\end{proof}
We identify the coefficients of quadratic forms on $\RR^{16\times k}$ with elements of 
$\RR^{16\times 16}\otimes \RR^{k\times k}$ invariant under the action $(E\otimes F) \mapsto (E^\top \otimes F^\top)$. The identification associates $E\otimes F$ with the quadratic form $\tr(EXFX^\top)$. 
The irreducible representations
in the decomposition of quadratic forms under $\Spin(9)\times O(k)$ are exactly those irreducible representations 
from the decomposition of the tensor square $\RR^{16\times 16}\otimes \RR^{k\times k}$ in which either both 
factors are symmetric, or both factors are skew-symmetric. If we let 
\begin{equation}
	\label{eq:lambda-order}
	\Lambda = \{(0,0),(1,0),(4,0),(0,1),(1,1),(4,1),(2,-1),(3,-1)\}
\end{equation}
then $\Lambda$ indexes exactly these irreducible representations. The following 
result then follows immediately from Proposition~\ref{prop:tensor-decomp}.
\begin{proposition}
	\label{prop:coef-decomp}
	The space of coefficients of quadratic form on $\RR^{16\times k}$ 
	decomposes into inequivalent irreducible representations of $\Spin(9)\times O(k)$ over $\RR$ as 
	\[ W = \bigoplus_{(i,j)\in \Lambda} W_{ij}\quad\textup{where}\quad
	W_{ij} = \textup{span}\{E_i\otimes F_j\;:\; E_i\in V_i,\; F_j\in U_j\}\quad\textup{for $(i,j)\in \Lambda$.}\]
\end{proposition}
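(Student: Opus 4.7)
The plan is to derive the statement directly from Proposition~\ref{prop:tensor-decomp} by cutting out the subspace of symmetric tensors. First I would make the identification explicit: under the natural isomorphism
\[ \RR^{16\times k}\otimes \RR^{16\times k}\;\cong\; \RR^{16\times 16}\otimes \RR^{k\times k} \]
sending a simple tensor $(v\otimes w)\otimes (v'\otimes w')$ (with $v,v'\in\RR^{16}$ and $w,w'\in\RR^k$) to $(vv'^\top)\otimes (ww'^\top)$, the element $E\otimes F$ on the right corresponds to the bilinear form $(X,Y)\mapsto \tr(E\,X\,F\,Y^\top)$, i.e.\ to the quadratic form $X\mapsto \tr(EXFX^\top)$ when restricted to the diagonal. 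Coefficients of quadratic forms are precisely those elements fixed by the "swap" involution on the left tensor square, which translates into the involution $\sigma:E\otimes F\mapsto E^\top\otimes F^\top$ on the right (indeed, $\tr(EXFX^\top)=\tr(E^\top X F^\top X^\top)$). Hence the space of coefficients is the $(+1)$-eigenspace of $\sigma$.

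Next I would observe that the action of $\Spin(9)\times O(k)$ commutes with $\sigma$ (the group acts by $E\mapsto gEg^\top$, $F\mapsto hFh^\top$, which is preserved under transposition), so $\sigma$ acts by $\pm 1$ on each isotypic component in the decomposition of Proposition~\ref{prop:tensor-decomp}. By Proposition~\ref{prop:S-prop}, every matrix in $V_i$ is symmetric when $i\in\{0,1,4\}$ and skew-symmetric when $i\in\{2,3\}$; similarly by the definition preceding Proposition~\ref{prop:Ok-rep}, $U_j$ consists of symmetric matrices for $j\in\{0,1\}$ and skew-symmetric matrices for $j=-1$. Consequently $\sigma$ acts on $V_i\otimes U_j$ by the product of the two parities, so $V_i\otimes U_j$ lies in the $(+1)$-eigenspace of $\sigma$ exactly when both factors are symmetric or both are skew-symmetric. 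Enumerating these cases gives precisely the index set $\Lambda$ displayed in~\eqref{eq:lambda-order}.

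Putting the pieces together, the space of coefficients of quadratic forms is
\[ W = \bigoplus_{(i,j)\in\Lambda} W_{ij}, \qquad W_{ij} := V_i\otimes U_j, \]
and by Proposition~\ref{prop:tensor-decomp} each $W_{ij}$ is an irreducible representation of $\Spin(9)\times O(k)$ and the summands are pairwise inequivalent. There is no real obstacle: the entire argument is a bookkeeping step that separates the full tensor square into its symmetric and skew-symmetric parts using the parity data recorded in Propositions~\ref{prop:S-prop} and~\ref{prop:Ok-rep}, and then invokes the already-established irreducibility and inequivalence.
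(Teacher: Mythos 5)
Your argument is correct and matches the paper's reasoning: the paper also identifies coefficients of quadratic forms with the fixed points of $\sigma\colon E\otimes F\mapsto E^\top\otimes F^\top$ in $\RR^{16\times 16}\otimes \RR^{k\times k}$, observes that each $V_i\otimes U_j$ lies entirely in the $+1$- or $-1$-eigenspace of $\sigma$ according to the symmetry/skew-symmetry of $V_i$ and $U_j$, and then cites Proposition~\ref{prop:tensor-decomp} for irreducibility and inequivalence of the surviving summands. You have simply spelled out the isomorphism and the action of $\sigma$ more explicitly than the paper does.
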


\subsection{Sum of squares feasibility}
\label{sec:sos-infeas}
We now apply the results of the previous two sections to perform symmetry reduction on the problem of deciding
whether $q_k^\OO$ is a sum of squares. We first give a more concrete description for the polynomials
that appear in the symmetry-reduced sum of squares decomposition. 

Let $(i,j)\in \Lambda$, let 
$E_{1},\ldots,E_{\dim(V_i)}$ be an orthonormal basis (with respect to the trace inner product) for $V_i$
and let $F_1,\ldots,F_{\dim(U_j)}$ be an orthonormal basis (with respect to the trace inner product) for $U_j$. 
Define
\[ s_{ij}(X) = \sum_{i'=1}^{\dim(V_i)}\sum_{j'=1}^{\dim(U_j)} \tr(X^\top E_{i'} X F_{j'})^2.\]
These quartic forms are exactly those that appear in~\eqref{eq:sos-symred}, after specializing to the
present context. The following is a restatement of~\eqref{eq:sos-symred} specialized to the case of 
quartic forms on $\RR^{16\times k}$ that are
invariant under the action of $\Spin(9)\times O(k)$.  
\begin{proposition}
	\label{prop:sos-char}
	Suppose that $f$ is a quartic form in $16k$ variables such that $f(gXh^\top) = f(X)$ for all $(g,h)\in \Spin(9)\times O(k)$. Then 
	$f$ is a sum of squares if and only if there exist $\lambda_{ij}\geq 0$ for $(i,j)\in \Lambda$
	such that 
	\[ f(X) = \sum_{(i,j)\in \Lambda} \lambda_{ij}s_{ij}(X) \quad\textup{for all $X\in \RR^{16\times k}$}.\]
\end{proposition}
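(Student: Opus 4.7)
The plan is to apply the general symmetry-reduced sum-of-squares characterization from equation~\eqref{eq:sos-symred} to the coefficient space of quadratic forms on $\RR^{16\times k}$, equipped with the $\Spin(9)\times O(k)$-action induced from $(g,h)\cdot X = gXh^\top$. Since $f$ is invariant under this action, the sum-of-squares feasibility problem~\eqref{eq:sos-feasibility} for $f$ is $\Spin(9)\times O(k)$-invariant in the sense of Section~\ref{sec:symred-prelim}. By Proposition~\ref{prop:coef-decomp}, the coefficient space decomposes in a multiplicity-free way into inequivalent irreducible representations $W = \bigoplus_{(i,j)\in \Lambda} W_{ij}$, with each $W_{ij}$ identified with the tensor product $V_i\otimes U_j$. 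Equation~\eqref{eq:sos-symred} therefore yields directly that $f$ is a sum of squares if and only if $f = \sum_{(i,j)\in \Lambda}\lambda_{ij} s_{ij}$ for some $\lambda_{ij}\geq 0$, where each $s_{ij}$ is the sum of squares of the quadratic forms corresponding to an orthonormal basis of $W_{ij}$.

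The remaining task is to verify that the specific quartic form $s_{ij}$ defined in the statement is indeed what arises in this way. Fix an orthonormal basis $E_1,\ldots,E_{\dim(V_i)}$ of $V_i$ with respect to the trace inner product and an orthonormal basis $F_1,\ldots,F_{\dim(U_j)}$ of $U_j$. Since the trace inner product on $\RR^{16\times 16}\otimes \RR^{k\times k}$ factors as $\langle E\otimes F, E'\otimes F'\rangle = \tr(E^\top E')\tr(F^\top F')$, the tensors $\{E_{i'}\otimes F_{j'}\}$ form an orthonormal basis of $W_{ij}= V_i\otimes U_j$. Under the identification $E\otimes F\mapsto \tr(EXFX^\top)$ of coefficients with quadratic forms, the basis element $E_{i'}\otimes F_{j'}$ corresponds to $\tr(E_{i'}XF_{j'}X^\top) = \tr(X^\top E_{i'}X F_{j'})$ by cyclicity of the trace. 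Summing the squares of these quadratic forms over $i'$ and $j'$ reproduces exactly the $s_{ij}(X)$ appearing in the statement.

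The one subtlety I would flag concerns the embedding $W_{ij}\hookrightarrow W$, where $W$ is the subspace of $\RR^{16\times 16}\otimes \RR^{k\times k}$ fixed by the swap $(E\otimes F)\mapsto (E^\top\otimes F^\top)$. For each $(i,j)\in \Lambda$, the two factors $V_i$ and $U_j$ share a common symmetry type (both symmetric, as for $(0,0),(1,0),(4,0),(0,1),(1,1),(4,1)$, or both skew, as for $(2,-1),(3,-1)$), so every simple tensor $E\otimes F$ with $E\in V_i$, $F\in U_j$ is automatically swap-invariant; no additional symmetrization or factor of $2$ arises. Consequently the chosen orthonormal basis of $W_{ij}$ remains orthonormal inside $W$, and the symmetry-reduced characterization applies without modification. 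I do not expect any genuine obstacle here: the structural work was carried out in assembling Proposition~\ref{prop:coef-decomp}, and this proposition is a direct translation of that decomposition into the language of sums of squares.
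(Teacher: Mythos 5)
Your proof is correct and matches the paper's approach: the paper presents Proposition~\ref{prop:sos-char} as a direct restatement of~\eqref{eq:sos-symred} specialized via the multiplicity-free decomposition in Proposition~\ref{prop:coef-decomp}, with no further argument. The swap-invariance observation you flag is already built into the choice of index set $\Lambda$ used to state Proposition~\ref{prop:coef-decomp} (only pairs $(i,j)$ where $V_i$ and $U_j$ have matching symmetry type appear), so the subtlety you raise is real but is resolved exactly as you resolve it.
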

This gives a linear programming feasibility problem to check whether $f$ is a sum of squares. To make 
the linear equality constraints relating the $\lambda_{ij}$ and $f$ more concrete, it is helpful to 
determine the dimension of the span of the $s_{ij}(X)$ for $(i,j)\in \Lambda$. 
\begin{proposition}
The space of  
$\Spin(9)\times O(k)$-invariant quartic forms is three-dimensional and is spanned by 
	$s_{00}(X)$, $s_{10}(X)$, and $s_{40}(X)$. 
\end{proposition}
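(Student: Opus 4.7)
The strategy is to reduce the classification of $\Spin(9)\times O(k)$-invariant quartic forms to the classification of $\Spin(9)$-invariant quadratic forms in the Gram matrix $M:=XX^\top$. The key input is the first fundamental theorem of invariant theory for the orthogonal group (see, e.g., Weyl's \emph{Classical Groups}): every $O(k)$-invariant polynomial on $\RR^{16\times k}$ (under the right action $X\mapsto Xh^\top$) is a polynomial in the entries of $M=XX^\top$. Hence every $O(k)$-invariant quartic form in $X$ is the pullback of a quadratic form in $M$, and for $k\geq 16$ (where the image $\{XX^\top\}$ is the PSD cone, with non-empty interior in $\Sym^2\RR^{16}$) this pullback is injective, giving an isomorphism.

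Next I would apply Schur's lemma to enumerate $\Spin(9)$-invariant quadratic forms in $M$, where $\Spin(9)$ acts by $M\mapsto gMg^\top$. By Proposition~\ref{prop:S-prop}, the space of $16\times 16$ symmetric matrices decomposes as $V_0\oplus V_1\oplus V_4$, and by Proposition~\ref{prop:spin9-rep} these are inequivalent irreducible $\Spin(9)$-representations, each with a one-dimensional endomorphism algebra. By the self-adjoint form of Schur's lemma (Proposition~\ref{prop:schur-sa}), the space of $\Spin(9)$-invariant symmetric bilinear forms on $V_0\oplus V_1\oplus V_4$ is exactly $3$-dimensional, spanned by the invariant inner products on the three summands. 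Equivalently, the corresponding invariant quadratic forms in $M$ are $\|P_{V_i}(M)\|_F^2$ for $i\in\{0,1,4\}$.

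Finally, I would verify that these three invariants coincide (up to positive scaling) with $s_{00},s_{10},s_{40}$. Since $U_0=\RR\cdot I_k$ has orthonormal basis $\{I_k/\sqrt{k}\}$, the definition of $s_{ij}$ specialized to $j=0$ gives
\[ s_{i0}(X)=\frac{1}{k}\sum_{i'}\tr(E_{i'}XX^\top)^2=\frac{1}{k}\,\|P_{V_i}(XX^\top)\|_F^2, \]
where Parseval is applied to the orthonormal basis $\{E_{i'}\}$ of $V_i\subseteq \RR^{16\times 16}$. Thus $s_{00},s_{10},s_{40}$ span the $3$-dimensional invariant space; their linear independence as polynomials in $X$ (for any $k\geq 2$) can be confirmed by evaluating on rank-one $X=x$ chosen so that $xx^\top$ has a non-zero component in the distinguishing $V_i$.

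The main obstacle is the appeal to the first fundamental theorem and the subtlety of handling small $k$: for $k<16$ the Gram map is no longer dominant, so one must check by hand that the pullback map on the invariant subspace remains injective (or, more directly, that the three specific forms $s_{00},s_{10},s_{40}$ stay linearly independent), which is easily done by the rank-one test above.
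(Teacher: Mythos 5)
Your proof is correct in outcome but takes a genuinely different route from the paper's. You reduce the statement to the first fundamental theorem of invariant theory for $O(k)$, which gives you abstractly that every $O(k)$-invariant quartic is a quadratic in $M=XX^\top$, and then you apply Schur's lemma to $\cS^{16}=V_0\oplus V_1\oplus V_4$. The paper never invokes the FFT. Instead, it uses the fact (built into the symmetry-reduction framework of Section~\ref{sec:symred-prelim} together with Proposition~\ref{prop:coef-decomp}) that the eight forms $s_{ij}$, $(i,j)\in\Lambda$, already span the space of invariant quartics, and then shows by a direct computation that each $s_{ij}$ is itself a $\Spin(9)$-invariant quadratic form in $Z=XX^\top$: the key observations are that $s_{j0}+s_{j1}=\sum_{j'}\|X^\top E_{j'}X\|_F^2$ since $U_0\oplus U_1=\cS^k$, and similarly $s_{j,-1}=\sum_{j'}\|X^\top E_{j'}X\|_F^2$ for $j=2,3$ since $U_{-1}$ is the space of skew-symmetric matrices. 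This finite, explicit argument sidesteps the dominance issue you correctly flag for $k<16$ --- there is no pullback-injectivity question when one works directly with the concrete spanning set $\{s_{ij}\}$ rather than with abstract invariants. In exchange, your route is more conceptual and makes transparent \emph{why} the dimension is three, whereas the paper's is more computational but entirely self-contained.

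One imprecision in your write-up: the ``rank-one test'' is not enough to certify linear independence of $s_{00},s_{10},s_{40}$. Evaluating three forms at one point produces one row of a $3\times N$ evaluation matrix; certifying rank three requires several well-chosen evaluation points, not a single $X$ with ``a non-zero component in the distinguishing $V_i$.'' (The paper does not spell this step out explicitly either, but the rank-$3$ coefficient matrices that appear in the proof of Theorem~\ref{thm:main-final} certify it after the fact.) If you keep the FFT route, you should either exhibit three evaluation points and check the resulting $3\times 3$ matrix is nonsingular, or argue linear independence directly from the orthogonality of the projections $P_{V_i}$.
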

\begin{proof}
	First we note that $s_{j0}(X) = \frac{1}{k}\|P_{V_j}(XX^\top)\|_F^2$ for $j=0,1,4$. 
	Recall that the space $\cS^{16}$ of $16\times 16$ symmetric matrices decomposes into inequivalent irreducible representations
	as $V_0\oplus V_1\oplus V_4$
	under the action of $\Spin(9)$ by $g\cdot Z \mapsto gZg^\top$. Therefore, the space of $\Spin(9)$
	invariant quadratic forms on $\cS^{16}$ is three dimensional, and is 
	spanned by $Z\mapsto \|P_{V_j}(Z)\|_F^2$ for $j=0,1,4$. 

	Since $U_0\oplus U_1 = \cS^{k}$, we have that 
	\[ s_{j0}(X) + s_{j1}(X) = \sum_{j'=1}^{\dim(V_j)}\|X^\top E_{j'}X\|_F^2 = \sum_{j'=1}^{\dim(V_j)}\tr(E_{j'}XX^\top E_{j'}XX^\top).\]
	The right hand side is a $\Spin(9)$-invariant quadratic form in $Z=XX^\top$ and so must be in the span 
	of $\|P_{V_j}(Z)\|_F^2$ for $j=0,1,4$. Therefore $s_{j1}(X)$ is in the span of $s_{00}(X)$, $s_{10}(X)$ and $s_{40}(X)$ for each $j=0,1,4$. 

	Consider, now, $s_{2,-1}(X)$ and $s_{3,-1}(X)$. Since $U_{-1}$ consists of all
	skew-symmetric matrices, 
	\[ s_{j,-1}(X) = \sum_{j'=1}^{\dim(V_j)}\|X^\top E_{j'}X\|_F^2 = \sum_{j'=1}^{\dim(V_j)}\tr(E_{j'}XX^\top E_{j'}^\top XX^\top) \quad\textup{for $j=2,3$}.\]
	Again, the right hand side is a $\Spin(9)$-invariant quadratic form in $Z = XX^\top$ and so must be in the span of 
	$\|P_{V_j}(Z)\|_F^2$ for $j=0,1,4$. Therefore, for $s_{j,-1}(X)$ is in the span of $s_{00}(X)$, $s_{10}(X)$, and $s_{40}(X)$ for $j=2,3$. 
\end{proof}
We are now ready to establish the main result of this section.
\begin{theorem}
	\label{thm:main-final}
	If $k\geq 17$ then $q_{k}^{\OO}$ is not a sum of squares. If $k\leq 16$ then $q_k^\OO$ is a sum of squares.
\end{theorem}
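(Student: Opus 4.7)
The plan is to convert the sum-of-squares question for $q_k^\OO$ into a finite-dimensional linear programming feasibility problem using the symmetry reduction already in place. By Proposition~\ref{prop:sos-char}, $q_k^\OO$ is a sum of squares if and only if it lies in the conic hull of the eight forms $\{s_{ij}\}_{(i,j)\in\Lambda}$. The preceding proposition shows that all eight of these forms live in the 3-dimensional subspace spanned by $s_{00}$, $s_{10}$, $s_{40}$, so the problem becomes: decide whether a certain vector in $\RR^3$ lies in the conic hull of eight explicit vectors in $\RR^3$, with data depending polynomially on $k$.

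First I would expand $q_k^\OO$ itself in the basis $\{s_{00},s_{10},s_{40}\}$. Lemma~\ref{lem:qreform} together with the identifications $\tr(XX^\top)^2 = 16\,\|P_{V_0}(XX^\top)\|_F^2 = 16k\,s_{00}(X)$ (using $V_0 = \RR \cdot I \subset \RR^{16\times 16}$) and $\|P_{V_1}(XX^\top)\|_F^2 = k\,s_{10}(X)$ gives $q_k^\OO = 8k\,s_{00} - 4k\,s_{10}$, so the target vector in $\RR^3$ is $(8k, -4k, 0)$. The negative second coordinate is what makes the problem nontrivial: some of the non-basis $s_{ij}$ must contribute negatively in the $s_{10}$-direction.

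Next, I would expand each of the remaining six $s_{ij}$ in this basis. A direct computation splitting off the trace component in $\cS^k = U_0 \oplus U_1$ yields
\[ s_{i1}(X) = T_i(Z) - s_{i0}(X) \quad\textup{and}\quad s_{i',-1}(X) = -T_{i'}(Z), \]
for $i \in \{0,1,4\}$ and $i' \in \{2,3\}$, where $Z = XX^\top$ and $T_i(Z) := \sum_{i''} \tr(E_{i''} Z E_{i''} Z)$ for an orthonormal basis $\{E_{i''}\}$ of $V_i$. Each $T_i$ is a $\Spin(9)$-invariant quadratic form on $\cS^{16}$ and therefore a linear combination of $\|P_{V_j}(Z)\|_F^2 = k\,s_{j0}(X)$ for $j \in \{0,1,4\}$. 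Its three coefficients can be pinned down by evaluating $T_i$ on one test matrix in each of $V_0$, $V_1$, $V_4$. This reduces to finite Clifford-algebra computations using only the defining relations~\eqref{eq:Srel} (together with identities like $S_i S_J S_i = \pm S_J$ governed by whether $i \in J$). At the end, every coefficient of every $s_{ij}$ in the 3D basis is an explicit affine function of $k$.

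With the resulting $8 \times 3$ coefficient matrix in hand, the theorem reduces to two explicit LP exercises. For $k \leq 16$, I would exhibit a nonnegative vector of $\lambda_{ij}$'s solving the equality system; via Proposition~\ref{prop:sos-char} this produces an explicit sum-of-squares decomposition of $q_k^\OO$. For $k \geq 17$, I would exhibit a single linear functional $\ell:\RR^3 \to \RR$ that is nonnegative on all eight vectors $\{s_{ij}\}$ but strictly negative on $(8k,-4k,0)$; by LP duality this certifies infeasibility and hence that $q_k^\OO$ is not a sum of squares. The main obstacle is the Clifford-algebra bookkeeping in the expansion step: each coefficient must come out cleanly as a function of $k$ so that the transition at $k = 17$ becomes visible in closed form and, ideally, so that the same $\ell$ works uniformly for all $k \geq 17$. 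Everything else is routine LP verification.
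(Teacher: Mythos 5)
Your plan is sound and follows the same overall strategy as the paper: reduce, via Proposition~\ref{prop:sos-char} and the three-dimensionality of the invariant subspace, to checking whether the target vector lies in the conic hull of the eight generators $\{s_{ij}\}_{(i,j)\in\Lambda}$ in $\RR^3$, then settle that by exhibiting either a nonnegative preimage ($k\leq 16$) or a separating linear functional ($k\geq 17$). The one place you diverge from the paper is in how the $8\times 3$ coefficient data gets produced. You propose to expand each $s_{ij}$ symbolically in the basis $\{s_{00},s_{10},s_{40}\}$ by reducing $T_i(Z)=\sum_{i''}\tr(E_{i''}ZE_{i''}Z)$ via Clifford-algebra identities; the paper instead sidesteps that bookkeeping entirely by evaluating $q_k^\OO$ and all eight $s_{ij}$ at three concrete matrices $X_1,X_2,X_3$ (chosen to make the resulting $3\times 8$ matrix full rank) and then solving a purely numerical system. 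Your route is heavier up front but yields coefficients that are explicit affine functions of $k$, which would immediately expose the transition at $k=17$ and plausibly let a single certificate $\ell$ cover all $k\geq 17$ in one stroke; the paper's route is computationally lighter, but to avoid verifying a $k$-dependent LP for every $k$ it leans on the additional observation that $q_{k'}^\OO$ is a restriction of $q_k^\OO$ for $k'\leq k$, so being a sum of squares passes down to smaller $k$ and failing to be passes up to larger $k$ — letting the authors do the LP only at $k=16$ and $k=17$. You should add this restriction argument (or actually carry out the uniform-in-$k$ certificate) to close the gap between ``an LP for each $k$'' and a single finite verification. Your intermediate computations ($q_k^\OO = 8k\,s_{00}-4k\,s_{10}$, and $s_{i1}=T_i-s_{i0}$, $s_{i',-1}=-T_{i'}$) are correct and match the identities the paper uses when proving the span is three-dimensional.
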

\begin{proof}
	If $k'\leq k$ then $q_{k'}^\OO$ is the restriction of $q_{k}^\OO$ to a
	subspace. Therefore, if $q_{17}^{\OO}$ is not a sum of squares, it
	follows that the same holds for $q_{k}^{\OO}$ with $k\geq 17$.
	Similarly, if $q_{16}^\OO$ is a sum of squares, the same holds for
	$q_{k}^\OO$ with $k\leq 16$. 

	Since the space of $\Spin(9)\times O(k)$-invariant quartic forms has dimension three, the 
	characterization of $\Spin(9)\times O(k)$-invariant sums of squares in Proposition~\ref{prop:sos-char}
	is equivalent to 
	\begin{equation}
		\label{eq:sos-eval}
	q_{17}^{\OO}(X_\ell) = \sum_{(i,j)\in \Lambda} \lambda_{ij}s_{ij}(X_\ell) \quad\textup{for $\ell=1,2,3$}
	\end{equation}
	for three points $X_1,X_2,X_3\in \RR^{16\times k}$ for which the $3\times 8$ coefficient matrix 
	has rank three. 
	For $k\geq 16$ define
	\[X_1 = \begin{bmatrix} 1 & 0_{1\times (k-1)}\\0_{15\times 1} & 0_{15\times (k-1)}\end{bmatrix},\;\;
		X_2 = \begin{bmatrix} I_{8\times 8} & 0_{8\times (k-8)}\\I_{8\times 8} & 0_{8\times (k-8)}\end{bmatrix},\;\;\textup{and}\;\; 
			X_3 = \begin{bmatrix} I_{16\times 16} & 0_{16\times (k-16)}\end{bmatrix}.\]
	We have that $q_{k}^\OO(X_1) = 1/4$, $q_k^{\OO}(X_2) = 64$ and $q_{k}^{\OO}(X_3) = 128$. 
	If $k=17$, by explicitly computing the coefficient matrix we see that $q_{17}^{\OO}$ is a sum of squares
	if and only if there exists $\lambda \in \RR_+^8$ such that 
	\[ \begin{bmatrix} \frac{1}{4}\\64\\128\end{bmatrix} =
 		\begin{bmatrix} \frac{1}{(16)(17)} &  \frac{1}{(16)(17)} & \frac{14}{(16)(17)} &  \frac{1}{17} &  
 			\frac{1}{17} & \frac{14}{17} & 0 & 0\\
 			\frac{16}{17} & \frac{16}{17} & 0 & \frac{18}{17} & \frac{18}{17} & 140 & 56 & 56\\
 		\frac{16}{17} & 0 & 0 & \frac{1}{17} & 9 & 126 & 36 & 84\end{bmatrix}\lambda\] 
		(where we have ordered the columns according to~\eqref{eq:lambda-order}).
				To see that this is infeasible, it is enough to multiply both sides on the left by
				$\begin{bmatrix} 252 & 3 & -2\end{bmatrix}$ to obtain
					\[ -1 = \begin{bmatrix} 127/68 & 15/4 & 441/34 & 304/17 & 0 & 6384/17 & 96 & 0\end{bmatrix}\lambda\]
						which clearly cannot be satisfied for any $\lambda\in \RR_+^8$.

	If $k=16$, by explicitly computing the coefficient matrix we see that $q_{16}^{\OO}$ is a sum of squares
	if and only if there exists $\lambda\in \RR_+^8$ such that 
	\[ \begin{bmatrix} \frac{1}{4}\\64\\128\end{bmatrix} = 
		\begin{bmatrix} \frac{1}{16^2} &  \frac{1}{16^2} & \frac{14}{16^2} &  \frac{15}{16^2} &  
 			\frac{15}{16^2} & \frac{210}{16^2} & 0 & 0\\
 			1 & 1 & 0 &  1& 1 & 140 & 56 & 56\\
 		1 & 0 & 0 & 0 & 9 & 126 & 36 & 84\end{bmatrix}\lambda.\]
	This is satisfied by choosing $\lambda = \begin{bmatrix} 0 &0 &0 &0 &64/15 & 0 &0 & 16/15 \end{bmatrix}$. 
		In other words, we have the sum of squares decomposition
		\[ q_{16}^{\OO}(X) = \frac{64}{15}s_{11}(X) + \frac{16}{15}s_{3,-1}(X).\]
\end{proof}

\section{Discussion}
\label{sec:discussion}

We conclude with a discussion of some open questions related to this work.

\paragraph{Increasing the degree and number of variables}
Clearly, if $q$ is a convex form in $n$ variables of degree $2d$ that is not a sum of squares, then 
\[ \tilde{q}(x,x_0) = q(x) + x_0^{2d}\]
is convex and not a sum of squares. Therefore, we know that for all $n\geq 16\times 17$ there is an $n$-variate
quartic form that is convex but not a sum of squares. 

It would be very interesting to come up with a general construction of explicit
degree $2d$ convex forms that are not sums of squares for all $2d\geq 4$. 
However, it is unclear how to take an existing convex form that is not a sum of squares and increase its
degree while maintaining convexity and the property of not being a sum of
squares.  A similar difficulty was also faced by Ahmadi and Parrilo, when
constructing convex forms in all degrees that were not SOS-convex~\cite{ahmadi2013complete}. Instead,
they managed to come up with a method to directly produce a convex but not sos-convex
form of degree $2d+2$ in $n$ variables, given a non-negative form of degree
$2d$ in $n$ variables that is not a sum of squares. 

\begin{problem}
	Given a convex form that is not a sum of sqaures, find a way to construct a convex form of higher degree 
	that is also not a sum of squares.
\end{problem}
This would immediately yield examples of convex forms of degree $2d\geq 6$ that are not sums of squares.

\paragraph{Gap instances for polynomial optimization on the sphere}
While the results of this paper are phrased in terms of convex forms that are not sums of squares, 
our arguments are really focused on finding examples of quartic forms for which 
\[ \gap(p) = \frac{p_{\max} - p_{\min}^{\sos}}{p_{\max} - p_{\min}}>2.\]
In general, to find a degree $2d$ form that is convex but not a sum of squares,
it suffices to find a form $p$ of degree $2d$ with $\gap(p) > d$. 
There has been some study of upper bounds on this quantity~\cite{nie2012sum,fang2020sum},
(which grow like $n^{d/2}$ for fixed $d$), and it is known that polynomials with 
random coefficients give rise to large values of $\gap(\cdot)$~\cite{bhattiprolu2017sum} with high probability. 
However, very little appears to be known in terms of explicit gap instances. 
\begin{problem}
	Find an explicit family of forms $p_n$ of fixed degree $2d$ with increasing number of variables, 
	such that $\gap(p_n)$ grows without bound with $n$.
\end{problem}
Any such family would eventually give rise to further examples of convex forms that are
not sums of squares.

\paragraph{Explicit gaps for OT-FKM forms}
The Cauchy-Schwarz forms $\CS_k^\OO$ are closely related to a class of forms known as OT-FKM-type isoparametric forms, which 
are particular classes of solutions of the Cartan-M\"unzner equations~\cite{ozeki1975some,ferus1981cliffordalgebren}.
These are forms $F$ such that  
\[ F(x) = \|x\|^4 - 2\sum_{i=0}^{m}\langle P_i x,x\rangle^2\]
where the $2\ell\times 2\ell$ matrices $P_{i}$  form a Clifford system, 
i.e., they are symmetric matrices satisfying $P_iP_j + P_{j}P_i = 2\delta_{ij}I$. 
These forms take values between $-1$ and $1$ on the unit sphere, and so 
\[ p(x) = \frac{1}{2}(\|x\|^4+F(x)) = \|x\|^4 - \sum_{i=0}^{m}\langle P_{i}x,x\rangle^2\]
is nonnegative for all $x$. These vanish (on the sphere) 
on special isparametric submanifolds called \emph{focal submanifolds}. 
The Cauchy-Schwarz forms $\CS_{k}^{\OO}$ studied in this paper 
correspond (up to a scaling factor) to the case of $p$ where the Clifford system is given by the $16k\times 16k$ 
matrices $P_i = S_i\otimes I_k$ for $i=0,1,\ldots,8$.

A full classification of when the nonnegative forms $p$ (associated with OT-FKM-type isoparametric forms $F$)
are sums of squares is given in~\cite{ge2018isoparametric}. It would be interesting to make this classification
quantitative.
\begin{problem}
	Compute $p_{\min}^{\sos}$ for the nonnegative forms $p(x) =
	\frac{1}{2}(\|x\|^4+F(x))$ arising from OT-FKM-type isoparametric forms $F$. 
\end{problem}
By a variation on the proof of Theorem~\ref{thm:main-final},  
it is possible to show that $[\CS_{k}^{\OO}]^{\sos}_{\min} = -\frac{2(k-1)}{8+7k}$ for $k\geq 2$. 
As such, this family satisfies
$\gap(\CS_k^\OO) = \frac{15k}{8+7k}\rightarrow 15/7$ as $k\rightarrow \infty$. In particular, this 
does not grow without bound as the number of variables increases.

\paragraph{Examples in fewer variables}
Solving the following problem from~\cite{ahmadi2013complete} would complete our understanding of when convex quartics are sums of squares.
\begin{problem}
	Find the smallest $n$ such that there is a convex quartic form in $n$ variables that is not a sum of squares.
\end{problem}
We know that such a form must have $n\geq 5$ by El Khadir's work~\cite{el2020sum}. 
While it should be possible to find explicit examples of forms in fewer than $16\times 17$ variables
for which $\gap(p) > 2$, this is only a sufficient condition for a quartic form to be convex but not a 
sum of squares. Finding minimal examples will likely require working directly with convexity, rather than 
Blekherman's sufficient condition for convexity from Theorem~\ref{thm:blekherman}.

\paragraph{Invariant forms}
We have seen that convex forms are not always sums of squares when restricted to the 
three-dimensional subspace of $\Spin(9)\times O(k)$-invariant quartic forms in $16k$ variables. It would be 
interesting to investigate the relationship between convexity and sums of squares when 
restricted to $G$-invariant forms in $n$ variables for other group actions on $\RR^n$. A particularly canonical case
would be to study forms invariant under the action of the symmetric group by permuting the variables (symmetric forms).
While it is known that there are symmetric nonnegative forms that are not sums of squares whenever 
there are nonnegative
forms that are not sums of squares~\cite{goel2016choi}, it remains unclear whether the same is true for convex forms. 
\begin{problem}
	Determine whether all convex symmetric forms are sums of squares.
\end{problem}
More generally, suppose there were an (orthogonal) action of a group $G$ on $\RR^n$ such that all $G$-invariant convex forms of degree $2d$ are sums of squares. Then, because $\|x\|^{2d}$ is $G$-invariant, by essentially the same argument as given in Section~\ref{sec:worst-case}, it would follow that $\gap(p)<d$ for all elements of the subspace of 
$G$-invariant forms.

\section*{Acknowledgments}
James Saunderson is the recipient of an Australian Research Council Discovery Early Career Researcher Award (project number DE210101056) funded by the Australian Government.

\bibliographystyle{alpha}
\bibliography{refs-cvxsos}

\begin{thebibliography}{ADKH19}

\bibitem[ADKH19]{ahmadi2019polynomial}
A.~A. Ahmadi, E.~De~Klerk, and G.~Hall.
\newblock Polynomial norms.
\newblock {\em SIAM Journal on Optimization}, 29(1):399--422, 2019.

\bibitem[AP13]{ahmadi2013complete}
A.~A. Ahmadi and P.~A. Parrilo.
\newblock A complete characterization of the gap between convexity and
  {SOS}-convexity.
\newblock {\em SIAM Journal on Optimization}, 23(2):811--833, 2013.

\bibitem[BGL17]{bhattiprolu2017sum}
V.~Bhattiprolu, V.~Guruswami, and E.~Lee.
\newblock Sum-of-squares certificates for maxima of random tensors on the
  sphere.
\newblock In {\em Approximation, Randomization, and Combinatorial Optimization.
  Algorithms and Techniques (APPROX/RANDOM 2017)}. Schloss
  Dagstuhl-Leibniz-Zentrum f\"ur Informatik, 2017.

\bibitem[Ble12]{blekherman2012chapter}
G.~Blekherman.
\newblock Nonnegative polynomials and sums of squares.
\newblock In {\em Semidefinite Optimization and Convex Algebraic Geometry},
  pages 159--202. SIAM, 2012.

\bibitem[DKP02]{de2002approximation}
E.~De~Klerk and D.~V. Pasechnik.
\newblock Approximation of the stability number of a graph via copositive
  programming.
\newblock {\em SIAM Journal on Optimization}, 12(4):875--892, 2002.

\bibitem[DS94]{diaconis1994eigenvalues}
P.~Diaconis and M.~Shahshahani.
\newblock On the eigenvalues of random matrices.
\newblock {\em Journal of Applied Probability}, 31(A):49--62, 1994.

\bibitem[EK20]{el2020sum}
B.~El~Khadir.
\newblock On sum of squares representation of convex forms and generalized
  {Cauchy--Schwarz} inequalities.
\newblock {\em SIAM Journal on Applied Algebra and Geometry}, 4(2):377--400,
  2020.

\bibitem[FF20]{fang2020sum}
K.~Fang and H.~Fawzi.
\newblock The sum-of-squares hierarchy on the sphere and applications in
  quantum information theory.
\newblock {\em Mathematical Programming}, pages 1--30, 2020.

\bibitem[FKM81]{ferus1981cliffordalgebren}
D.~Ferus, H.~Karcher, and H.-F. M{\"u}nzner.
\newblock Cliffordalgebren und neue isoparametrische {H}yperfl{\"a}chen.
\newblock {\em Mathematische Zeitschrift}, 177(4):479--502, 1981.

\bibitem[Fro78]{frobenius1878lineare}
G.~Frobenius.
\newblock {\"U}ber lineare {S}ubstitutionen und bilineare {F}ormen.
\newblock {\em Journal f{\"u}r die reine und angewandte Mathematik (Crelle's
  Journal)}, 1878(84):1--63, 1878.

\bibitem[GKR16]{goel2016choi}
C.~Goel, S.~Kuhlmann, and B.~Reznick.
\newblock On the {C}hoi--{L}am analogue of {H}ilbert's 1888 theorem for
  symmetric forms.
\newblock {\em Linear Algebra and its Applications}, 496:114--120, 2016.

\bibitem[GP04]{gatermann2004symmetry}
K.~Gatermann and P.~A. Parrilo.
\newblock Symmetry groups, semidefinite programs, and sums of squares.
\newblock {\em Journal of Pure and Applied Algebra}, 192(1-3):95--128, 2004.

\bibitem[GT18]{ge2018isoparametric}
J.~Ge and Z.~Tang.
\newblock Isoparametric polynomials and sums of squares.
\newblock {\em arXiv preprint arXiv:1811.05587}, 2018.

\bibitem[Har90]{harvey1990spinors}
F.~R. Harvey.
\newblock {\em Spinors and calibrations}.
\newblock Elsevier, 1990.

\bibitem[Hil88]{hilbert1888darstellung}
D.~Hilbert.
\newblock {\"U}ber die {D}arstellung definiter {F}ormen als {S}umme von
  {F}ormenquadraten.
\newblock {\em Mathematische Annalen}, 32(3):342--350, 1888.

\bibitem[HN10]{helton2010semidefinite}
J.~W. Helton and J.~Nie.
\newblock Semidefinite representation of convex sets.
\newblock {\em Mathematical Programming}, 122(1):21--64, 2010.

\bibitem[Hur98]{hurwitz1898ueber}
A.~Hurwitz.
\newblock Ueber die {C}omposition der quadratischen {F}ormen von belibig vielen
  {V}ariablen.
\newblock {\em Nach.\ Ges.\ Wiss.\ G{\"o}ttingen}, 1898:309--316, 1898.

\bibitem[Kra98]{kramer1998octonion}
L.~Kramer.
\newblock Octonion hermitian quadrangles.
\newblock {\em Bulletin of the Belgian Mathematical Society-Simon Stevin},
  5(2/3):353--362, 1998.

\bibitem[MS65]{motzkin1965maxima}
T.~S. Motzkin and E.~G. Straus.
\newblock Maxima for graphs and a new proof of a theorem of {T}ur{\'a}n.
\newblock {\em Canadian Journal of Mathematics}, 17:533--540, 1965.

\bibitem[Nie12]{nie2012sum}
J.~Nie.
\newblock Sum of squares methods for minimizing polynomial forms over spheres
  and hypersurfaces.
\newblock {\em Frontiers of mathematics in china}, 7(2):321--346, 2012.

\bibitem[OT75]{ozeki1975some}
H.~Ozeki and M.~Takeuchi.
\newblock On some types of isoparametric hypersurfaces in spheres {I}.
\newblock {\em Tohoku Mathematical Journal, Second Series}, 27(4):515--559,
  1975.

\bibitem[Par12]{parrilo2012chapter}
P.~A. Parrilo.
\newblock Polynomial optimization, sums of squares, and applications.
\newblock In {\em Semidefinite Optimization and Convex Algebraic Geometry},
  pages 47--157. SIAM, 2012.

\bibitem[PV04]{pastur2004moments}
L.~Pastur and V.~Vasilchuk.
\newblock On the moments of traces of matrices of classical groups.
\newblock {\em Communications in mathematical physics}, 252(1):149--166, 2004.

\bibitem[RSST18]{raymond2018symmetric}
A.~Raymond, J.~Saunderson, M.~Singh, and R.~R. Thomas.
\newblock Symmetric sums of squares over k-subset hypercubes.
\newblock {\em Mathematical Programming}, 167(2):315--354, 2018.

\bibitem[Sch01]{schur1901klasse}
I.~Schur.
\newblock {\em {\"U}ber eine {K}lasse von {M}atrizen, die sich einer gegebenen
  {M}atrix zuordnen lassen}.
\newblock Dieterich, 1901.

\bibitem[Sto05]{stolz2005diaconis}
M.~Stolz.
\newblock On the {Diaconis-Shahshahani} method in random matrix theory.
\newblock {\em Journal of Algebraic Combinatorics}, 22(4):471--491, 2005.

\bibitem[Tia00]{tian2000matrix}
Y.~Tian.
\newblock Matrix representations of octonions and their applications.
\newblock {\em Advances in Applied Clifford Algebras}, 1(10):61--90, 2000.

\end{thebibliography}

\end{document}